
\NeedsTeXFormat{LaTeX2e}
\documentclass[reqno,10pt]{amsart}
\usepackage[english]{babel}
\usepackage{amsthm}
\usepackage{amsmath}
\usepackage{amssymb}
\usepackage[latin1]{inputenc}
\usepackage[normalem]{ulem}
\usepackage{tikz}
\usepackage{tikz-3dplot}
\usepackage{pgfplots}
\usepackage{enumerate}

\usepackage[all]{xy}
\usepackage{dsfont}
\usepackage[bookmarksnumbered,colorlinks]{hyperref}
\usepackage{graphics}
\usepackage{graphicx}
\usepackage{enumerate}


\def\bb#1\eb{\textcolor{blue}
{#1}} %
\def\br#1\er{\textcolor{red}
{#1}} %
\hyphenation{Lo-ren-tzian}

\newcommand{\df}{{\rm d}}

\newcommand{\R}{\mathds R}
\newcommand{\N}{\mathds N}

\newcommand{\C}{\mathcal{C}}
\newcommand{\LL}{\mathds L}
\newcommand{\HH}{\mathds H}

\newcommand{\be}{\begin{equation}}
\newcommand{\ee}{\end{equation}}

   \def\br#1\er{\textcolor{red}{#1}} %
      \def\bb#1\eb{\textcolor{blue}{#1}} %

\title[Wind Riemannian structures]{Wind Riemannian spaceforms \\ and Randers metrics of \\ constant flag curvature}

\author[M. A. Javaloyes]{Miguel Angel Javaloyes}
\address{Departamento de Matem\'aticas, \hfill\break\indent
Universidad de Murcia, \hfill\break\indent
Campus de Espinardo,\hfill\break\indent
30100 Espinardo, Murcia, Spain}
\email{majava@um.es}

\author[M. S\'anchez]{Miguel S\'anchez}
\address{Departamento de Geometr\'{\i}a y Topolog\'{\i}a, Facultad de Ciencias, \hfill\break\indent
 Universidad de Granada,\hfill\break\indent
 Campus Fuentenueva s/n,
 \hfill\break\indent 18071 Granada, Spain}
\email{sanchezm@ugr.es}

\date{30.11.2016}


\thanks{2010 {\em Mathematics Subject Classification:} Primary  53C60, 53C22 \\
\textbf{Key words:} Finsler metrics, Randers and
Kropina metrics, Zermelo navigation, wind Finslerian structure,  constant flag curvature, model space, Randers spaceforms, $(\alpha.\beta)$-metric.}

\begin{document}
\newtheorem{thm}{Theorem}[section]
\newtheorem{prop}[thm]{Proposition}
\newtheorem{lemma}[thm]{Lemma}
\newtheorem{cor}[thm]{Corollary}
\theoremstyle{definition}
\newtheorem{defi}[thm]{Definition}
\newtheorem{notation}[thm]{Notation}
\newtheorem{exe}[thm]{Example}
\newtheorem{conj}[thm]{Conjecture}
\newtheorem{prob}[thm]{Problem}
\newtheorem{rem}[thm]{Remark}
\newtheorem{conv}[thm]{Convention}
\newtheorem{crit}[thm]{Criterion}

\begin{abstract}
Recently, {\em wind Riemannian structures (WRS)} have been introduced  as a  generalization of Randers and Kropina metrics. They are constructed from the natural data for Zermelo navigation problem, namely, a Riemannian metric   $g_R$ and a vector field  $W$ (the {\em wind}), where, now, the restriction of mild wind $g_R(W,W)<1$ is dropped. 

Here,  the models of  WRS spaceforms of constant flag curvature are determined. Indeed, 
the celebrated classification of Randers metrics of constant flag curvature  by Bao, Robles and Shen \cite{BRS}, extended to the Kropina case in the  works by Yoshikawa, Okubo and Sabau \cite{YoOk07, YoSa}, can be used to obtain the  local classification. For the global one, a suitable result on completeness for WRS yields  
the complete simply connected models. 
In particular, any of the local models in the Randers classification does admit an extension to a unique model of wind Riemannian structure, even if it cannot be extended 
as a complete Finslerian manifold.

Thus,  WRS's  emerge as the natural framework for the analysis of Randers spaceforms and, prospectively,  {\em wind  Finslerian structures} would become important for other global problems too. 
For the sake of completeness, a brief  overview about WRS (including  a  useful link with the conformal geometry of a class of relativistic  spacetimes) is also provided.

\end{abstract}

\maketitle


\section{Introduction}

Wind Riemmanian structures (WRS's) are generalizations of the indicatrices of both,  Randers and Kropina metrics on a manifold $M$, introduced in \cite{CJS} for several purposes. The simplest one is that 
they provide a natural framework for modelling 
Zermelo navigation problem. Namely, consider the motion of a zeppelin: its engine is able to develop a maximum speed with respect to the air, modelled by the indicatrix of a Riemanian metric $g_R$, but the air is moving with respect to earth with a (time-independent) velocity modelled by a vector field $W$. So, the maximum speed of the zeppelin with respect to earth is modelled by $\Sigma=S_R+W$. This is the indicatrix of a Randers metric when $g_R(W,W)<1$,   but when $g_R(W,W)=1$, it is the indicatrix of a Kropina metric, a singular Finsler metric in the sense that it is not defined in all the directions;  the geodesics of such metrics solve Zermelo problem of finding the fastest path between two prescribed points. In the more general case in that no restriction on $g_R(W,W)$ is imposed, $\Sigma$ is a WRS and,  as proven in \cite{CJS}, such a WRS admits a notion of geodesic which also solves Zermelo navigation. However, there is a deeper motivation for  studying  WRS's  because of  the existence of a link between the geometry of such WRS's and the conformal geometry of a class of relativistic spacetimes, those which are standard with a space-transverse Killing
vector field, or {\em SSTK spacetimes}.

 The existence of a fruitful correspondence between the geometry of Randers manifolds and the conformal geometry of stationary  spacetimes (a particular case of SSTK ones) has already been  pointed out and systematically exploited in \cite{CaJaMa, CJS, FHS, GHWW,JLP15} and others.
However, the importance of the correspondence in the case of general WRS's becomes especially useful for the Finslerian framework. Indeed, the existence of apparently singular Finslerian elements for WRS's  (such as regions with a Kropina metric) is reinterpreted from the Lorentzian viewpoint in a completely non-singular way. In particular, geodesics for WRS's can be seen as projections of lightlike pregeodesics  of SSTK spacetimes. Moreover, geodesic completeness of WRS's becomes equivalent to the global hyperbolicity  of the spacetime.    Notice that both, lightlike pregeodesics and global hyperbolicity are conformally invariant elements in Lorentzian Geomety.

 Our main aim along the present note will be to show an application of WRS's to 
 Randers metrics of constant flag curvature (CFC).  The complete classification of these manifolds was a landmark in Finslerian Geometry, obtained by  Bao, Robles and Shen \cite{BRS}. The solutions admit a neat description, which can be summarized by saying that $g_R$ must have constant curvature and $W$ must be  a  homothetic vector field.
 From a global viewpoint, however, there is  a  striking difference with the classical (simply connected) models of Riemannian manifolds of constant curvature:  in the Randers case, some of the  models  are necessarily incomplete.  Namely,  some local models cannot be extended to a complete one,  and only its  inextensibility can be claimed. Nevertheless, we will see that this comes from the fact that $g_R(W,W)$ may not remain bounded by 1, and we will show that all the local models admit a unique extension as a complete simply connected WRS. Moreover, taking into account the classification of Kropina metrics of CFC by   Yoshikawa and Okubo \cite{YoOk07} and Yoshikawa and Sabau \cite{YoSa} we will provide both, the local and global classification of WRS's.
 
 This article is organized as follows. In Section \ref{s2},  the motivations and necessary results on WRS's are briefly summarized.  
In Section \ref{s3}, the classification of WRS's  of CFC is achieved. For this purpose, we summarize first the known results in the Randers \cite{BRS} and Kropina cases \cite{YoOk07, YoSa} (subsection \ref{s3.1}), and  show how these results can be used for the local classification  (subsection \ref{s3.2}). The global classification is obtained in subsection \ref{s3.3}. Here, the key issue is completeness. First, we  prove a result of independent interest on completeness for WRS's,  Theorem \ref{th_compl}; the subtleties of this result are stressed in Example \ref{ex1}.  The global classification is reached in Theorem \ref{thm:globalresult}, by using the previous  result on completeness, the local result and a specific study of the case when $W$ is properly homothetic. 
Finally, in  the appendix (Section \ref{s4}), we summarize some known results on Killing and homothetic vector  fields.  The results in subsection \ref{s4.1}  are useful to understand when Randers models can be naturally extended in  WRS's, and those in subsection~\ref{s4.2} for a neat description of the Kropina case and the region of  transition between mild and strong wind.

\section{A Finslerian overview on Wind Riemannian Structures}\label{s2}

\subsection{Generalizing Randers and Kropina metrics}
Let $(g_R,W)$ be Zermelo data  in a manifold $M$, namely, $g_R$ is a Riemannian metric and $W$, a vector field on $M$. Assume that  $|W|_R<1$, where $|\cdot |_R$ denotes the pointwise $g_R$-norm, and consider the Randers metric $F$ whose indicatrix $\Sigma$ at $p\in M$   is 
obtained by displacing the $g_R$-indicatrix $S_R$  with the vector  $W_p$, that is, $\Sigma = S_R+ W_p$. This metric is the  key of Zermelo navigation problem (the $F$-geodesics solve it), and it can be written as:
$$
F(v_p)= \frac{1}{\Lambda(p)} \left(\sqrt{\Lambda(p) |v_p|_R^2+ g_R(v_p,W_p)^2}-g_R(W_p,v_p)\right) , \quad \hbox{where} \; \Lambda= 1-|W|_R^2,
$$ 
 for any $v_p\in T_pM$. 
Clearly, this expression crashes when $|W_p|_R=1$, as $\Lambda$ vanishes. However, we can  rewrite it by removing the root  from  the numerator and, then, 
$\Lambda$  from  the denominator:
\begin{equation}\label{conicF}
F(v_p)= \frac{|v_p|_R^2}
{g_R(W_p,v_p)+\sqrt{\Lambda(p) |v_p|_R^2+ g_R(v_p,W_p)^2}} = \frac{|v_p|_R^2}
{g_R(W_p,v_p)+\sqrt{h(v_p,v_p)}},
\end{equation}
where 
\begin{equation}\label{hmetric}h(v_p,v_p)= \Lambda(p) |v_p|_R^2+ g_R(v_p,W_p)^2.\end{equation}
This expression for $F(v_p)$ makes sense for an arbitrary wind $W$ even if $\Lambda$ vanishes, suggesting a possibility for the description of the case $|W|_R\geq  1$. However, a  caution should be taken into account: both, $F(v_p)$ and the expression for $h$ (which lies inside a root) should be nonnegative. So, restrict the domain of $F$ by imposing:
\begin{equation}\label{Arestriction}
\hbox{if} \; \Lambda(p)\leq 0 \; \hbox{restrict to} \; 
\left\{\begin{array}{ll}h(v_p,v_p)\geq 0, \; \hbox{and} \\ g_R(W_p,v_p)>0.
\end{array}\right.
\end{equation}
These restrictions have the following meaning (see Figure \ref{figure1}). 
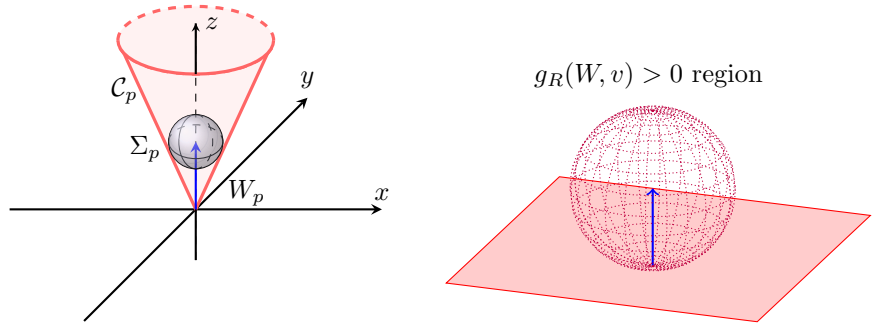
\begin{figure}
\begin{tikzpicture}[scale=0.9,x=0.5cm,y=0.5cm,z=0.3cm,>=stealth]
\draw[->,thick] (xyz cs:x=-5.5) -- (xyz cs:x=5.5) node[above] {$x$};
\draw[->,thick] (xyz cs:z=-5.5) -- (xyz cs:z=5.5) node[above] {$y$};
\filldraw[color=red!60, fill=red!5, very thick] (-2.3,5) -- (0,0) -- (2.3,5);
\filldraw[color=red!60, fill=red!5, very thick,dashed] (0,5) circle (1.15cm and 0.5cm);
\filldraw [color=red!60, fill=red!5, very thick,domain=180:360] plot ({2.3*cos(\x)}, {1*sin(\x)+5});
\node at (0,7)(s){$h(v,v)\geq 0$ and $g_R(W,v)>0$ region};
\draw[dashed,->] (xyz cs:y=-1.5) -- (xyz cs:y=5.5) node[right] {$z$};
\draw[thick] (0,-1.5) -- (0,0);
 \draw[thick,blue,->] (0,0) -- (0,2);
 \draw[thick] (0,4) -- (0,5.5);
 \node at (1.5,0.5)(s){$W_p$};
 \node at (-1.5,1.8)(s){$\Sigma_p$};
 \node at (-2.1,3.5)(s){$\C_p$};
  \draw (-0.8,2) arc (180:360:0.4cm and 0.25cm);
    \draw[dashed] (-0.8,2) arc (180:0:0.4cm and 0.25cm);
    \draw (0,2.8) arc (90:270:0.25cm and 0.4cm);
    \draw[dashed] (0,2.8) arc (90:-90:0.25cm and 0.4cm);
    \draw (0,2) circle (0.4cm);
    \shade[ball color=blue!10!white,opacity=0.20] (0,2) circle (0.4cm);
\end{tikzpicture}
\tdplotsetmaincoords{70}{110}
\begin{tikzpicture}
	[scale=1.1,
		tdplot_main_coords,
		curve/.style={purple,densely dotted}]

	\node at (4,1.4,3)(s){$g_R(W,v)>0$ region};
	\filldraw[
        draw=red,%
        fill=red!20,%
    ]          (0,-1.2,0)
            -- (4,-1.2,0)
            -- (4,2.8,0)
            -- (0,2.8,0)
            -- cycle;
             \draw[thick,blue,->] (0,0,-1) -- (0,0,0);
            
            \coordinate (O) at (0,0,0);
	
	\foreach \angle in {-90,-75,...,90}
	{
		\tdplotsinandcos{\sintheta}{\costheta}{\angle}%

		\coordinate (P) at (0,0,\sintheta);

		\tdplotdrawarc[curve]{(P)}{\costheta}{0}{360}{}{}
		
		\tdplotsetthetaplanecoords{\angle}
		
		\tdplotdrawarc[curve,tdplot_rotated_coords]{(O)}{1}{0}{360}{}{}
	}

\end{tikzpicture}
\caption{\label{figure1}  The  diagram on the left represents  the domain of $F_p$ when $\Lambda(p)<0$. In this case, we consider $g_R$ the Euclidean metric in $\R^3$ and $W=(0,0,2)$. The condition $h(v,v)\geq 0$ determines that the region must lie in the two cones of $h$ and the condition $g_R(W,v)>0$ that it must be contained in the half-space $z>0$; so, the cone $\C_p$ is selected.  The diagram on the right represents the indicatrix and the domain of $F_p$, $z>0$, in the Kropina case when $\Lambda(p)=0$ and $W=(0,0,1)$. }
\end{figure}
Formula 
\eqref{hmetric} shows that $h$ is a signature-changing metric, which becomes Riemannian in the region of mild wind $|W|_R<1$, degenerate when the wind is critical $|W|_R=1$, and Lorentzian of signature $(+,-,\dots , -)$ when the wind is strong $|W|_R>1$. 
While in the region $|W|_R<1$ one 
recovers a Randers metric with indicatrix $F^{-1}(1)=\Sigma$ ($=S_R+W)$, in the region $|W|_R=1$ (i.e., $\Lambda=0$) one has a Kropina metric $
\alpha^2/\beta$ with $\alpha=|\cdot|_R$, 
$\beta= 2 g_R(W,\cdot )$. Indeed the restriction $g_R(W_p,v_p)>0$ selects   the (pointwise) tangent open half-space $A_p$ where $F$ becomes
 positive, and one can still regard 
 $\Sigma=S_R+W$ as the indicatrix of $F$ (up to the ``singular'' vector $0_p$), Figure \ref{figureindic}.
 \begin{figure}
\tdplotsetmaincoords{70}{110}
\begin{tikzpicture}[scale=1.8,tdplot_main_coords]
    \filldraw[
        draw=red,%
        fill=red!20,%
    ]          (0,0,0)
            -- (4,0,0)
            -- (4,4,0)
            -- (0,4,0)
            -- cycle;
       \filldraw[
        draw=blue,%
        fill=blue!20,%
    ]          (1,2,0)
            -- (2,2,0)
            -- (2,3,0)
            -- (1,3,0)
            -- cycle;
          \filldraw[
        draw=blue,%
        fill=blue!20,%
    ]          (0.5,0.5,0)
            -- (1.5,0.5,0)
            -- (1.5,1.5,0)
            -- (0.5,1.5,0)
            -- cycle;
            \filldraw[
        draw=blue,%
        fill=blue!20,%
    ]          (2.5,0.25,0)
            -- (2.5,1.25,0)
            -- (3.5,1.25,0)
            -- (3.5,0.25,0)
            -- cycle;
           \draw[thick] (1,1,0) circle (0.3cm and 0.15cm);
           \filldraw[blue] (1,1.1,0) circle (0.5pt);
           \draw[thick] (3,0.7,0) circle (0.3cm and 0.15cm);
           \filldraw[blue] (3,0.395,0) circle (0.5pt);
           \draw[thick] (1.5,2.5,0) circle (0.3cm and 0.15cm);
           \filldraw[blue] (1.5,2.1,0) circle (0.5pt);
           \draw[thick,red] (1.5,2.1,0) -- (1.5+0.5,2.1+0.4,0);
           \draw[thick,red] (1.5,2.1,0) -- (1.5-0.5,2.1+0.2,0);
\end{tikzpicture}
\caption{\label{figureindic}The three possibilities for indicatrices of the WRS in a $2$-dimensional example.}
\end{figure}
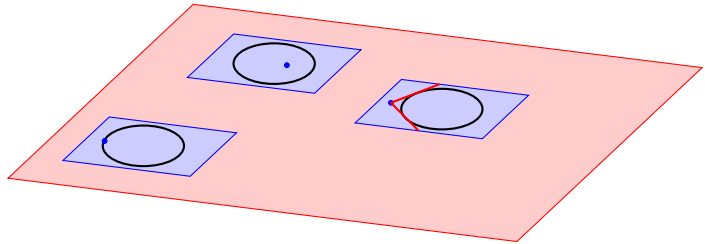
 In the region 
$|W|_R>1$, the Lorentzian metric $h$ determines  at every tangent space  two (lightlike) cones, and the restrictions \eqref{Arestriction} have
a neat meaning: the allowed $v_p$ must belong  either to one of these two $h$-cones $\C_p$ (the cone selected by $g_R(W_p,v_p)>0$) or to the interior 
region $A_p$ determined by it  (see Figure \ref{figure1}).  So, $F$ can be regarded as a ``conic'' Finsler 
metric defined only on the tangent vectors satisfying 
\eqref{Arestriction}. Notice that, now, 
$\Sigma=S_R+W$ includes the ``indicatrix" $F^{-1}(1)$. Indeed, $
\Sigma_p \cap \C_p$ divides $\Sigma_p$  into  two pieces, and $F^{-1}(1)$ corresponds with one of them; namely, the (strongly) convex one, when looking from infinity into the cone region. Easily, one can check that the other piece is equal to $F_l^{-1}(1)$ where 
\begin{equation}\label{lorentzF}
F_l(v_p):=   \frac{|v_p|_R^2}
{g_R(W_p,v_p)-\sqrt{h(v_p,v_p)}}.
\end{equation}
Recall that $F_l^{-1}(1)$
 is  (strongly) concave and, so,  $F_l$ will be 
called a {\em Lorentz-Finsler} metric.
Notice that $F_l$ is univocally determined by\footnote{This property does not hold for the general wind Finslerian structures studied in \cite{CJSwind}.} $F$; consequently, most of our computations will deal only with $F$. However, $F_l$  has a nice interpretation for Zermelo navigation under strong wind: the indicatrix of $F_l$ provides the {\em minimum} speed of the moving object at each allowed direction.

Let us summarize the previous approach and introduce suitable conventions:
\begin{enumerate}
\item Given a (connected) manifold $M$, a {\em wind Riemannian structure (WRS)} is any hypersurface $\Sigma \subset TM$ which can be expressed as $\Sigma= S_R+W$ for some vector field $W$ and Riemannian metric $g_R$ (both  univocally determined)\footnote{In \cite{CJSwind},  general wind Finslerian structures are introduced in a more abstract way, and wind Riemannian ones are then regarded as a particular case of that definition. However, both approaches are clearly equivalent  (see \cite[Prop. 2.13]{CJSwind}). }. At each $p\in M$, $\Sigma_p$ encloses an open domain $B_p$ which will be called the {\em unit ball at $p$.}

\item Such a  $\Sigma$ determines the (possibly signature-changing) metric $h$ in \eqref{hmetric}, as well as a  domain $A:=\cup_{p\in M}A_p$ included in the slit tangent bundle $TM\setminus\{\mathbf{0}\}$, defined by choosing each $A_p$ as follows: $A_p=T_pM\setminus\{0\}$ 
in the region of mild wind ($\Lambda(p)>0$), $A_p$ is the open half space determined by \eqref{Arestriction} in the region of critical wind ($\Lambda(p)=0$), and $A_p$ is the interior of the solid cone determined by \eqref{Arestriction} in the region $M_l$ of strong wind ($\Lambda(p)<0$). 

\item $\Sigma$ also determines:
\begin{enumerate}[(i)]
\item A conic Finsler metric  $F:
A\rightarrow \R$.  
\item A Lorentz-Finsler metric  $F_l:
A_l\rightarrow \R$,  where $A_l$ is the union of the pointwise domains $A_p$ in the region $M_l$ of strong wind.
\end{enumerate}
Moreover, the following three extensions of the domains $A, A_l$ will be used when necessary: 
\begin{enumerate}[(a)]
\item $F$ and $F_l$ are extended continuously on 
$M_l$ to the $h$-cones $\C_p \subset T_pM\setminus\{0\}$,

\item  $F_l$ is extended as $\infty$
outside the region 
of strong wind, that is, in $TM\setminus(\{\mathbf{0} \}\cup TM_l)$, and 

\item in the region of critical wind ($\Lambda (p)=0)$), we define $F(0_p)=F_l(0_p)=1$  (even though,  necessarily, such a choice is discontinuous, see below). 
\end{enumerate}
\end{enumerate} 

\subsection{Wind curves and the appearance of Lorentzian geometry} In the framework of Zermelo navigation, consider two points $p,q\in M$ and a WRS $\Sigma$ which provides the maximum velocities at each direction. If a moving object going from $p$ to $q$ is represented by the curve $\gamma: [t_0,t_1]\rightarrow M$, where $t_0, t_1$ are,  respectively,  the instant of departure from $p$ and arrival to $q$, then its velocity  at each instant $t$ must be an allowed one, that is:
\begin{equation}\label{windcurve}
F(\dot\gamma(t)) \leq 1 \leq 
F_l(\dot\gamma(t)), \qquad \qquad \forall t\in[t_0,t_1].
\end{equation}
Any curve in $M$ satisfiying these inequalities will be called a {\em wind curve}. Recall that \eqref{windcurve} assumes implicitly that $\dot\gamma(t)$  lies in the domains of $F$ and $F_l$ explained at the end of the last subsection, so:
(a) $\dot\gamma(t)$ is allowed to belong not only to the unit ball $B_{\gamma(t)}$ 
but also to its closure (in the slit tangent bundle) $
\Sigma_{\gamma(t)}$, (b) as $F_l\equiv \infty$ outside $M_l$, the last inequality 
in \eqref{windcurve} imposes no restriction when the wind is not strong, and (c) the zero velocity $0_p$ is excluded  in both, the region of strong wind (because it is not an allowed velocity) and  
the region of mild wind (by convenience, analogous to the restriction to regular curves in Riemannian Geometry); however, the zero-velocity is allowed in the region of critical wind, as it has a special meaning there, namely, it is  the minimum allowed velocity in the direction of the wind. 
The arrival time of the moving object $t_1-t_0$ is bounded by the $F$ and $F_l$ lengths of the curves, that is:
$$
\ell_F(\gamma):= \int_{t_0}^{t_1}F(\dot\gamma(t))dt \leq t_1-t_0 \leq \ell_{F_l}(\gamma):= \int_{t_0}^{t_1}F_l(\dot\gamma(t))dt .
$$ 
Obviously, when $\gamma$ is reparametrized so that $F(\dot\gamma(t))\equiv 1$, this corresponds to a moving object which  uses the maximum possible velocity along the trajectory of $\gamma$ so that it  spends the  minimum possible travel time along that trajectory; analogously, in the case that  $\gamma$ lies entirely in the region of strong wind, a reparametrization with $F_l(\dot\gamma(t))\equiv 1$ corresponds to minimum velocity and  maximum travel time.

A better insight is obtained by considering the graph $\{(t,\gamma(t)), t\in [t_1,t_2]\}\subset \R\times M$ of the wind curve. 
The allowed velocities for $\gamma$ at each instant $t\in\R$ and each point $p$  are represented by 
\begin{equation}\label{windgraph}
\{(1,v_p)\in T_{(t,p)}(\R\times M): F(v_p) \leq 1 \leq 
F_l(v_p)\}.
\end{equation}
Now, notice that the half lines in $T_{(t,p)}(\R\times M)$ which start at $0$ and cross any of these allowed velocities provide a solid cone on  
$T_{(t,p)}(\R\times M)$  (recall Figure \ref{figurecones}).  Thus, the WRS yields naturally a cone structure on all $\R\times M$. This cone structure is invariant in the $t$-coordinate, as neither the wind  $W$  nor the  metric $g_R$ are time-dependent. A simple computation shows that the cone structure is equal to the (future) cone structure associated with a Lorentzian metric $g$ on $\R \times M$, namely:
\begin{equation}
\label{eg}
g=- (\Lambda \circ \pi) \df t^2+\pi^*\omega\otimes \df t+\df
t\otimes \pi^*\omega+\pi^*g_0
\end{equation}
where $\pi:\R\times M\rightarrow M$ is the natural projection and:
\begin{equation}
\label{egbis}
\Lambda=1-|W|^2_R, \qquad \omega=-g_R(W,\cdot ), \qquad  g_0=g_R.
\end{equation}
Some comments on this Lorentzian  metric are in order:
\begin{itemize}
\item The metric $g$ is Lorentzian with signature $(-,+,\dots, +)$. At each 
 $(t,p)$ the non-zero tangent vectors $(\tau, v_p)\in T_{(t,p)}(\R\times M)$ 
satisfying $g((\tau, v_p),(\tau, v_p))=0$ (resp. $<0$,  $>0$) are called {\em 
lightlike} (resp. {\em timelike}, {\em spacelike}). 
The lightlike vectors at each $(t,p)$ are distributed in two cones. One of them, the {\em future-directed lightlike cone}, 
contains tangent vectors 
with $\tau>0$; the other one is called the {\em past-directed lightlike cone}.  The future-directed lightlike cone plus the corresponding {\em future-directed timelike vectors} (those inside the solid cone) contain the set \eqref{windgraph} determined by the wind curves. 
\item The natural vector field $K=\partial_t$ is  Killing for $g$. When $\Lambda>0$ (resp. $=0,<0$), $K$ is future-directed timelike (resp. future-directed lightlike, spacelike), see Figure \ref{figurecones}. The natural projection 
$t: \R\times M \rightarrow \R$ 
is a {\em time function} because it is strictly increasing on any future-directed timelike or lightlike curve; its slices $t=$constant are {\em spacelike hypersurfaces} (they inherit a Riemannin metric). According to \cite{CJSwind}, these spaces are called {\em standard with  a spacelike-transverse Killing vector field},
 or just
{ SSTK} spacetimes. They include important families of relativistic spacetimes, as the {\em standard stationary ones} (those with $\Lambda>0$), which correspond to the Randers case under our approach.
\item The SSTK spacetime $(\R\times M,g)$ will be 
canonically associated with a  WRS. However, the only 
relevant properties of the spacetime for our purposes 
will be the {\em conformally 
invariant} ones\footnote{So, sometimes a different 
representative of the 
conformal class of the SSTK 
may be preferred. For 
example,  the 
normalization $\Lambda\equiv 
1$ was chosen in the case of Randers 
metrics and standard 
stationary spacetimes studied 
in \cite{CJS}. (This is the reason why we preferred to write the metric $g_0$ in \eqref{eg} even if  it is taken $g_0=g_R$ later.)}. 
Indeed, any conformal metric  $g^*=\Omega\cdot g$, $\Omega>0$, will have the same lightlike cones as $g$. Moreover, it is  well-known  that if two Lorentzian metrics $g,g^*$ share the same timelike cones then they are (pointwise) conformally related through some function $\Omega>0$ \cite{BEE, MinSan}.     
\end{itemize}
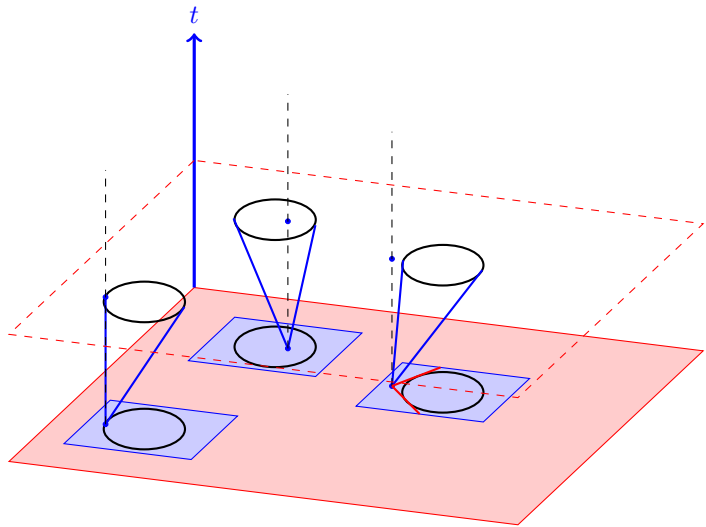
\begin{figure}
\tdplotsetmaincoords{70}{110}
\begin{tikzpicture}[scale=1.8,tdplot_main_coords]
    \filldraw[
        draw=red,%
        fill=red!20,%
    ]          (0,0,0)
            -- (4,0,0)
            -- (4,4,0)
            -- (0,4,0)
            -- cycle;
       \filldraw[
        draw=blue,%
        fill=blue!20,%
    ]          (1,2,0)
            -- (2,2,0)
            -- (2,3,0)
            -- (1,3,0)
            -- cycle;
          \filldraw[
        draw=blue,%
        fill=blue!20,%
    ]          (0.5,0.5,0)
            -- (1.5,0.5,0)
            -- (1.5,1.5,0)
            -- (0.5,1.5,0)
            -- cycle;
            \filldraw[
        draw=blue,%
        fill=blue!20,%
    ]          (2.5,0.25,0)
            -- (2.5,1.25,0)
            -- (3.5,1.25,0)
            -- (3.5,0.25,0)
            -- cycle;
           \draw[thick] (1,1,0) circle (0.3cm and 0.15cm);
           \filldraw[blue] (1,1.1,0) circle (0.5pt);
           \draw[thick] (1,1,1) circle (0.3cm and 0.15cm);
           \filldraw[blue] (1,1.1,1) circle (0.5pt);
           \draw[thick,blue] (1,1.1,0)  -- (1+0.01,1.1+0.22,1);
           \draw[thick,blue] (1,1.1,0)  -- (1-0.05,1.1-0.445,1-0.05);
           \draw[thick] (3,0.7,0) circle (0.3cm and 0.15cm);
           \filldraw[blue] (3,0.395,0) circle (0.5pt);
           \draw[thick] (3,0.7,1) circle (0.3cm and 0.15cm);
           \filldraw[blue] (3,0.395,1) circle (0.5pt);
           \draw[thick,blue] (3,0.395,0)  -- (3,0.395+0.62,1);
           \draw[thick,blue] (3,0.395,0)  -- (3,0.395,1);
           \draw[thick] (1.5,2.5,0) circle (0.3cm and 0.15cm);
           \filldraw[blue] (1.5,2.1,0) circle (0.5pt);
           \draw[thick] (1.5,2.5,1) circle (0.3cm and 0.15cm);
           \filldraw[blue] (1.5,2.1,1) circle (0.5pt);
           \draw[thick,blue] (1.5,2.1,0) -- ((1.5,2.1+0.71,1);
           \draw[thick,blue] (1.5,2.1,0)  -- (1.5,2.1+0.088,1);
            \draw[dashed] (1,1.1,0) -- (1,1.1,2);
             \draw[dashed] (3,0.395,0)  -- (3,0.395,2) ;
              \draw[dashed] (1.5,2.1,0) -- (1.5,2.1,2);
              \draw[thick,red] (1.5,2.1,0) -- (1.5+0.5,2.1+0.4,0);
           \draw[thick,red] (1.5,2.1,0) -- (1.5-0.5,2.1+0.2,0);
    \draw[very thick,->,blue] (0,0,0) -- (0,0,2) node[left,anchor=south]{$t$};
     \draw[
        draw=red,dashed%
    ]          (0,0,1)
            -- (4,0,1)
            -- (4,4,1)
            -- (0,4,1)
            -- cycle;
\end{tikzpicture}
\caption{\label{figurecones}The three possibilities for the cones  of the associated SSTK spacetime. At the tangent space of each $p$, the interesection of the $(n+1)$-dimensional cone with the hyperplane $dt_p=1$ projects into the indicatrix of a Randers, Kropina or wind-Riemannian metric.}
\end{figure}
The correspondence between the WRS and the SSTK spacetime is very fruitful for both, our Finslerian problem  and the geometry of relativistic spacetimes. Indeed, this happens even in the particular case $\Lambda>0$, where  the WRS is just a Randers metric and the SSTK spacetime, a standard stationary spacetime (see the detailed study in \cite{CJS} and further developments such as \cite{FHS,JLP15}). 
However, the correspondence becomes crucial for general WRS.  Indeed, the study of WRS through more classical Finslerian elements such as the metrics $F$, $F_l$ presents the important drawback of having ``singular'' elements (as the 0 vector 
in the Kropina region) or discontinuous elements (as the sudden jump in the structure of $A_p$ when $p$ varies from the region of mild wind to the non-mild one). This reason underlies the difficulties to develop general Zermelo navigation in spite of attempts such as \cite{Caratheodory}. 
Nevertheless, the spacetime viewpoint allows both: (i) a description in terms of completely regular and non-singular elements (the  metric $g$) and (ii) the possibility to derive results for WRS by using   results or techniques in the well established conformal theory of spacetimes. Indeed,  the general problem of Zermelo navigation can be described and solved satisfactorily by using this correspondence  \cite{CJSwind}. We will not go deeper here in this correspondence and refer to the exhaustive study in \cite{CJSwind}; however, we would like to point out that some of the results  below were obtained by using it.

\subsection{Balls, geodesics and completeness} \label{sec:balls}
Let $\Sigma$ be a WRS and,  for any $p,q\in M$,  let $C^{\Sigma}_{p, q}$ denote the set of all the wind curves from $p$ to $q$.
The {\em forward} and {\em backward} {\em wind balls}  of center $p_0\in M$ and radius $r>0$ 
 associated with the WRS $\Sigma$ are, resp:
\begin{align*}
&B^+_{\Sigma}(p_0,r)=\{x\in M:\ \exists\   \gamma\in C^{\Sigma}_{p_0, x}, \text{ s.t. }   r=b_\gamma-a_\gamma \, \text{and} \;  \ell_F(\gamma)<r<\ell_{F_l}(\gamma)\},\\
&B^-_{\Sigma}(p_0,r)=\{x\in M:\ \exists\   \gamma\in C^{\Sigma}_{x, p_0}, \text{ s.t. }  r=b_\gamma-a_\gamma \, \text{and} \;  \ell_F(\gamma)<r<\ell_{F_l}(\gamma)\}.\\
\intertext{These balls are open \cite[Remark 5.2]{CJSwind} and their closures are called {\em (forward, backward) closed wind balls}, denoted  $\bar{B}^\pm_{\Sigma}(p_0,r)$. Between these two types of  balls,  the {\em forward} and {\em backward  c-balls} are defined, resp., by:} 
&\hat{B}^+_{\Sigma}(p_0,r)=\{x\in M:\ \exists\  \gamma\in C^{\Sigma}_{p_0, x},\text{ s.t. } r=b_\gamma-a_\gamma \, \text{(so,} \;  \ell_F(\gamma)\leq r\leq\ell_{F_l}(\gamma) )  \}
,\\
&\hat{B}^-_{\Sigma}(p_0,r)=\{x\in M:\ \exists\ \gamma\in C^{\Sigma}_{x, p_0},\text{ s.t. } r=b_\gamma-a_\gamma \, \text{(so,} \; \ell_F(\gamma)\leq r\leq\ell_{F_l}(\gamma))  \} 
 \end{align*}
for  $r> 0$; for $r=0$, by convention $\hat{B}^\pm_{\Sigma}(p_0,0)=p_0$ (so that, 
consistently with our conventions, if $0_{p_0}\in \Sigma_{p_0}$ then $p_0 \in \hat B^\pm_\Sigma(p_0,r)$ for all $r\geq 0$). If $\Sigma$ comes from a Randers metric,  then  $B^\pm_{\Sigma}(p_0,r)$
coincides with the usual (forward or backward) open balls. However, even in the Riemannian case, one may have 
$B^\pm_{\Sigma}(p_0,r)\subsetneq \hat B^\pm_{\Sigma}(p_0,r) \subsetneq \bar B^\pm_{\Sigma}(p_0,r)$ (put $M=\R^2\setminus \{(1,0)\}$, $p_0=(0,0)$, $r=2$). 

Starting at these notions of balls, geodesics can be defined as follows.  A   wind  curve $\gamma: I=[a,b]\to M$,  $a<b$,   is called a
{\em unit extremizing geodesic} if
\begin{equation}\label{eunitgeodesic}
 \gamma(b)\in \hat{B}_\Sigma^+(\gamma(a),b-a)\setminus B_\Sigma^+(\gamma(a),b-a) .  \end{equation}
Then, a curve is an {\em extremizing geodesic}  if it is an affine    
reparametrization  of a unit extremizing geodesic, and it is a {\em geodesic} if it is locally an {\em extremizing geodesic}.

The geodesics of a WRS coincide, up to a reparametrization, with the projection on $M$ of the future-directed lightlike geodesics of the associated SSTK spacetime $(\R\times M,g)$. This allows one to prove that a  curve $\gamma: I\rightarrow M$ is a geodesic if an only if it lies in one of the following cases: 
\begin{enumerate}
\item $\gamma$ is a geodesic of the conic Finsler metric $F$. In this case, $\dot\gamma(t)$ lies always in $A$ (it cannot belong to its boundary) and $\gamma$ may lie in the regions of mild, critical or strong wind, eventually crossing them; moreover,  $\gamma$ minimizes locally the $F$-length in a natural sense.

\item $\gamma$ is a geodesic of the Lorentz-Finsler metric $F_l$. In this case, $\dot\gamma(t)$ lies always in $A_l$ (it cannot belong to its boundary) and $\gamma$ is entirely contained in the region of strong wind $M_l$; moreover,  $\gamma$ maximizes locally the $F_l$-length in a natural sense.

\item  $\gamma$ is an {\em exceptional geodesic}, that is, $\gamma$ is constantly equal to some point $p_0$ 
with $\Lambda (p_0)=0$ and $d\Lambda(v_{p_0})=0$ for all 
$v_{p_0}$ 
satisfying $g_R(W_{p_0},v_{p_0})=0$.

\item $\gamma$ is included in the closure of $M_l$ and it satisfies: (i) whenever $\gamma$ remains in $M_l$,  it is 
a lightlike  pregeodesic of the Lorentzian metric $h$  in \eqref{hmetric},  reparametrized so that $F(\dot\gamma)\equiv  F_l(\dot\gamma)$ is a constant $c>0$ (that is, $\gamma$ is a {\em boundary geodesic} in $M_l$, with velocity in the boundary of each solid cone $\bar A_p, p\in M_l$), and (ii) $\gamma$ can reach the boundary $\partial M_l$ (which is included in the critical region $\Lambda=0$) only at  isolated points $s_j\in I, j=1,2,...$, where\footnote{Recall that the equality $F(\dot\gamma)=  F_l(\dot\gamma)=c>0$ close to $s_j$ is compatible with $\dot\gamma(s_j)=0$ because of the Kropina character of $\Sigma$ at $\gamma(s_j)$.}  
$\dot\gamma(s_j)=0$,  $d\Lambda$ does not vanish on all the $g_R$-orthogonal to $W_{\gamma(s_j)}$  and the second derivative of $\gamma$ (in one and then in any coordinates) is continuous and does not vanish at $s_j$.  

\end{enumerate}

The WRS is {\em  (geodesically) complete} when  its inextendible geodesics are defined on all $\R$; it is called {\em forward} (resp. {\em backward}) {\em complete} when only the upper (resp. lower) unboundedness of their intervals of definition  is required.  
  Given $\Sigma$, its  {\em reverse WRS} is $-\Sigma= S_R-W$. Clearly, the reverse parametrization of  a geodesic for $\Sigma$ becomes a geodesic for $-\Sigma$ and $\Sigma$ is forward complete iff $-\Sigma$ is backward complete. From now on,  completeness will mean forward and backward completeness. We have the following characterization extracted from \cite[Prop. 6.4]{CJSwind}. 
\begin{thm}\label{c63}
Let $(M,\Sigma)$ be a WRS. The following properties are equivalent:
\begin{enumerate}[(i)]
\item $\Sigma$ is geodesically complete, \item $B^+_\Sigma(x,r)$
and $B^-_\Sigma(x,r)$ are precompact for every $x\in M$ and $r>0$.
\item $\hat B^+_\Sigma(x,r)$ and $\hat B^-_\Sigma(x,r)$ are
compact for every $x\in M$ and $r>0$.
\end{enumerate}
In particular, if $M$ is compact then $\Sigma$ is  geodesically  complete.
\end{thm}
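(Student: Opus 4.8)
The plan is to carry everything over to the associated SSTK spacetime $(\R\times M,g)$ of \eqref{eg}--\eqref{egbis} and to use the two facts recalled above: that the geodesics of $\Sigma$ are, up to reparametrization, exactly the projections by $\pi$ of the future-directed lightlike geodesics of $g$, and that $\Sigma$ is geodesically complete if and only if $(\R\times M,g)$ is globally hyperbolic with the slices $\{t=\mathrm{const}\}$ as Cauchy hypersurfaces. The bridge in both directions is the $t$-lift: for a wind curve $\gamma\colon[a,b]\to M$ the curve $t\mapsto(t,\gamma(t))$ is, by \eqref{windcurve}--\eqref{windgraph}, a future-directed causal curve of $g$ from $(a,\gamma(a))$ to $(b,\gamma(b))$; conversely any future causal curve, reparametrized by $t$, projects to a wind curve. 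Since $K=\partial_t$ is a complete Killing field, translating in $t$ and writing $J^\pm$ (resp.\ $I^\pm$) for the causal (resp.\ chronological) future/past, this yields
\[
\hat B^+_\Sigma(x,r)=\pi\!\left(J^+((0,x))\cap\{t=r\}\right),\qquad
\hat B^-_\Sigma(x,r)=\pi\!\left(J^-((0,x))\cap\{t=-r\}\right),
\]
and the same with $B^\pm_\Sigma$ on the left and $I^\pm$ on the right, where $\pi$ restricted to any slice is a diffeomorphism onto $M$.

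With this dictionary, \emph{(i)$\Rightarrow$(iii)} becomes the standard Lorentzian fact that, in a globally hyperbolic spacetime, the intersection of $J^+(p)$ (or $J^-(p)$) with a Cauchy hypersurface is compact: applying it to $p=(0,x)$ and the Cauchy slice $\{t=\pm r\}$ (which is isometric to the Riemannian slice $(M,g_R)$ via the flow of $K$) and pushing forward by $\pi$ shows that $\hat B^\pm_\Sigma(x,r)$ is compact. Then \emph{(iii)$\Rightarrow$(ii)} is immediate: $B^\pm_\Sigma(x,r)\subset\hat B^\pm_\Sigma(x,r)$, which is compact, hence closed in the Hausdorff space $M$, so the closed wind ball $\bar B^\pm_\Sigma(x,r)$ is a closed subset of a compact set.

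For \emph{(ii)$\Rightarrow$(i)} I would argue by contradiction, in the style of the Hopf--Rinow theorem. An inextendible incomplete geodesic $\gamma\colon[0,T)\to M$, $T<\infty$, normalized so that $F(\dot\gamma)\equiv1$ (cases 1 and 4) or $F_l(\dot\gamma)\equiv1$ (case 2) --- case 3 being automatically complete --- has velocity that cannot stay in a compact subset of $TM$, so $\gamma$ leaves every compact subset of $M$ as $t\to T$; hence the c-balls $\hat B^+_\Sigma(\gamma(0),r)$ and, by a limit-curve argument together with the lower semicontinuity of $\ell_F$ and the upper semicontinuity of $\ell_{F_l}$ under uniform convergence, also the open wind balls $B^+_\Sigma(\gamma(0),r)$, for $r\nearrow T$, fail to be precompact, contradicting (ii). (Equivalently, precompactness of all wind balls confines the causal diamonds $J^+(p)\cap J^-(q)$ of the causal SSTK spacetime to finite unions of closed wind balls, hence makes them precompact, and global hyperbolicity --- and then (i) --- follows by standard causality theory.) Finally, for the \emph{``in particular''} statement, when $M$ is compact the hypersurface $\Sigma=S_R+W\subset TM$ is compact (a sphere bundle over a compact base translated by a bounded field), as is the closure of the solid region it bounds over $M_l$; along any inextendible geodesic the velocity, under the normalizations above, stays in this fixed compact subset of $TM$, so the geodesic equations --- smooth ODEs in each of the four cases --- have solutions defined on all of $\R$ by the escape lemma.

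The main obstacle is making the dictionary between wind balls and slices of the causal cones precise, especially over the critical region: there the zero velocity is an \emph{admissible} velocity and $F,F_l$ are discontinuous at $0_p$, so the correspondence with causal curves must be handled through the conventions (a)--(c) above, and the c-balls need not even be closed when $M$ is incomplete (cf.\ the punctured-plane example in Section~\ref{sec:balls}), which is why (iii) is stated with compactness rather than closedness. The other delicate point is the limit-curve and semicontinuity machinery behind (ii)$\Rightarrow$(i). All of this is precisely the content of the Lorentzian analysis of WRS from which Theorem~\ref{c63} is extracted.
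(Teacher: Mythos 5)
The paper does not actually prove Theorem~\ref{c63}: it is stated as ``extracted from [CJSwind, Prop.~6.4]'', so there is no internal argument to compare yours against. Your route --- translating wind balls into slices of causal futures/pasts of the associated SSTK spacetime and invoking causality theory --- is precisely the route of that cited source, and your dictionary $\hat B^\pm_\Sigma(x,r)=\pi\bigl(J^\pm((0,x))\cap\{t=\pm r\}\bigr)$ (and its chronological analogue for the open balls) is correct and is indeed the key lemma there. The step (iii)$\Rightarrow$(ii) is fine as you state it.

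Two points keep your sketch from being a self-contained proof. First, your (i)$\Rightarrow$(iii) is derived from the asserted equivalence ``$\Sigma$ complete $\Leftrightarrow$ the slices $\{t=\mathrm{const}\}$ are Cauchy''; but in the source this equivalence is \emph{part of the same package of equivalences} as the ball characterization you are trying to prove, so taking it as a black box makes the argument essentially circular. To break the circle you would need to prove independently that completeness of the WRS forces every inextendible future causal curve to meet every slice (this is the hard direction, and it is where the real work of [CJSwind, Th.~5.11] lies). Second, your (ii)$\Rightarrow$(i) leans on an escape-lemma/limit-curve argument that presupposes the WRS geodesics are governed by an ODE flow with the usual continuation properties; this is far from automatic here, because the case-(4) geodesics touch the critical boundary $\partial M_l$ with vanishing velocity and $F,F_l$ are discontinuous there, so the ``velocity leaves every compact set of $TM$'' step needs the specific analysis of these boundary geodesics carried out in the reference. (Also note that the final ``in particular'' clause needs none of your ODE discussion: if $M$ is compact every wind ball is trivially precompact, so it follows at once from (ii)$\Rightarrow$(i).) In short: right strategy, same as the source the paper cites, but the two load-bearing steps are exactly the ones you have delegated to machinery whose proof is the content of the theorem.
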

The following result of completeness will be used throughout the text.
\begin{thm}\label{JVresult}
Let $(M,\Sigma)$ be a complete WRS and $W$ a complete homothetic field of the associated conic pseudo-Finsler metrics $F$ and $F_l$. Then,  $\Sigma+W$ is a complete WRS.
\end{thm}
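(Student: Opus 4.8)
The plan is to reduce completeness of $\Sigma+W$ to the characterization in Theorem~\ref{c63} and then to identify the $c$-balls of $\Sigma+W$ with those of $\Sigma$ transported by the flow of $W$. Write the Zermelo data of $\Sigma$ as $(g_R,W_0)$, so that $\Sigma=S_R+W_0$ and $\Sigma+W=S_R+(W_0+W)$. The first ingredient is a bookkeeping observation: if $\mathcal D_p\subset T_pM$ denotes the set of velocities admissible for a wind curve of $\Sigma$ at $p$ — the vectors $v_p$ with $F(v_p)\le 1\le F_l(v_p)$, i.e.\ the closed region enclosed by $\Sigma_p$, intersected with the closed solid cone $\bar A_p$ in the strong-wind region and with the usual treatment of $0_p$ in the critical region — then, directly from the definitions in Section~\ref{s2} (see \cite{CJSwind}), the analogous set for $\Sigma+W$ is $\mathcal D_p+W_p$. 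Thus a curve $\gamma$ is a wind curve of $\Sigma+W$ iff $\dot\gamma(t)-W(\gamma(t))\in\mathcal D_{\gamma(t)}$ for (a.e.)\ $t$. By Theorem~\ref{c63} it then suffices to show that $\hat B^{\pm}_{\Sigma+W}(x,r)$ is compact for every $x\in M$ and $r>0$.

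Next I would bring in the flow $\{\varphi_s\}_{s\in\R}$ of $W$, globally defined since $W$ is complete. As $W$ is homothetic for both $F$ and $F_l$, each $\varphi_s$ is a homothety of both, with a common rate $k\in\R$: $F\circ d\varphi_s=e^{ks}F$ and $F_l\circ d\varphi_s=e^{ks}F_l$ (and $d\varphi_s$ carries the domains $A$, $A_l$ and the cones $\C_p$ to those over $\varphi_s(p)$); the equality of the two rates is forced since $F_l$ is univocally determined by $F$, and the relevant facts on homothetic fields are collected in Subsection~\ref{s4.1}. Fix $x\in M$, $r>0$, and put $\tau(r):=\int_0^r e^{-ku}\,du$, a finite positive number (equal to $\tfrac{1-e^{-kr}}{k}$ if $k\neq0$ and to $r$ if $k=0$). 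Given a wind curve $\gamma:[0,r]\to M$ of $\Sigma+W$ with $\gamma(0)=x$, I would consider $\sigma(t):=\varphi_{-t}(\gamma(t))$; since $W$ is invariant under its own flow one computes $\dot\sigma(t)=d\varphi_{-t}\big(\dot\gamma(t)-W(\gamma(t))\big)$, so $\dot\gamma(t)-W(\gamma(t))\in\mathcal D_{\gamma(t)}$ gives $F(\dot\sigma(t))\le e^{-kt}\le F_l(\dot\sigma(t))$. Absorbing the factor $e^{-kt}$ into the time variable via the substitution $\tau=\int_0^t e^{-ku}\,du$ and using positive $1$-homogeneity of $F$ and $F_l$, the reparametrized curve $\tilde\sigma:[0,\tau(r)]\to M$ satisfies $F(\dot{\tilde\sigma})\le 1\le F_l(\dot{\tilde\sigma})$, i.e.\ it is a wind curve of $\Sigma$ joining $\tilde\sigma(0)=x$ to $\tilde\sigma(\tau(r))=\varphi_{-r}(\gamma(r))$. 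The construction is visibly reversible (invert the reparametrization, then apply $t\mapsto\varphi_t$), which yields
\[
\hat B^{+}_{\Sigma+W}(x,r)=\varphi_r\big(\hat B^{+}_{\Sigma}(x,\tau(r))\big),\qquad
\hat B^{-}_{\Sigma+W}(x,r)=\hat B^{-}_{\Sigma}\big(\varphi_{-r}(x),\tau(r)\big),
\]
the second identity from the same argument applied to wind curves ending at $x$. Since $\Sigma$ is complete, the right-hand sides are compact by Theorem~\ref{c63}; hence so are the left-hand sides (the maps $\varphi_{\pm r}$ being homeomorphisms of $M$), and a final application of Theorem~\ref{c63} proves that $\Sigma+W$ is complete.

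The conceptual core — conjugating wind curves of $\Sigma+W$ to wind curves of $\Sigma$ by $\varphi_s$, with the homothety factor absorbed into a reparametrization depending only on $k$ and $r$, never on $\gamma$ — is elementary. I expect the real obstacles to be the two foundational points where the wind-Finslerian formalism of \cite{CJSwind} must genuinely be invoked: (i) establishing rigorously that the admissible-velocity set of $\Sigma+W$ is exactly $\mathcal D+W$, keeping proper track of the conic domains, their closures and the special role of $0_p$ in the critical region — and noting that the very type of wind at a point (mild, critical, strong) may change when passing from $\Sigma$ to $\Sigma+W$; and (ii) verifying that a homothetic field for $F$ is automatically homothetic for $F_l$ with the same rate $k$ and that its flow maps the relevant domains and cones onto their counterparts, so that the single scalar $k$ governs the whole construction uniformly. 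One could instead argue with the open balls $B^{\pm}$ of Theorem~\ref{c63}(ii), but then the $F$- and $F_l$-lengths would also have to be tracked through the reparametrization, which is messier; the $c$-ball route looks cleaner.
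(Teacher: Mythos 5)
Your argument is correct and, notably, self-contained, whereas the paper disposes of this theorem in a single line: ``it is a straightforward consequence of \cite[Theorem 1.2]{JV}'', i.e.\ it outsources the entire content to the Javaloyes--Vit\'orio correspondence between a (conic) pseudo-Finsler metric and the one obtained by translating its indicatrix along the flow of a homothetic field. What you do differently is to bypass any geodesic correspondence altogether: you conjugate \emph{wind curves} of $\Sigma+W$ into wind curves of $\Sigma$ via $\sigma(t)=\varphi_{-t}(\gamma(t))$, absorb the homothety factor $e^{-kt}$ into the universal reparametrization $\tau(t)=\int_0^t e^{-ku}\,du$ (depending only on $k$ and the radius, not on the curve), and read off the identities $\hat B^{+}_{\Sigma+W}(x,r)=\varphi_r\big(\hat B^{+}_{\Sigma}(x,\tau(r))\big)$ and $\hat B^{-}_{\Sigma+W}(x,r)=\hat B^{-}_{\Sigma}\big(\varphi_{-r}(x),\tau(r)\big)$, which together with Theorem \ref{c63} give completeness at once. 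This buys a proof that never touches the four-case description of WRS geodesics (the genuinely delicate part of the theory), at the price of the two foundational verifications you yourself flag: that the admissible-velocity set of $\Sigma+W$ at $p$ is exactly $\mathcal D_p+W_p$, with correct bookkeeping of the conventions on $0_p$ and of the possible change of wind type from $\Sigma$ to $\Sigma+W$; and that the flow of $W$ carries the conic domains and cones onto their images with a single rate $k$ governing both $F$ and $F_l$. Both points do hold --- all the conventions are defined pointwise in terms of the hypersurface $\Sigma_p$ itself, hence commute with the translation by $W_p$ and with $d\varphi_s$, and the common rate follows because $F_l$ is univocally determined by $F$ --- and they are essentially what \cite{JV} establishes in general. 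So your route is a legitimate and arguably cleaner alternative for the completeness statement than deriving it from the geodesic correspondence; if you write it up, the one place to be scrupulous is the equivalence ``$\gamma$ is a wind curve of $\Sigma+W$ iff $\dot\gamma(t)-W(\gamma(t))\in\mathcal D_{\gamma(t)}$ for all $t$'', including the endpoint cases where the velocity lies on the cone $\C_p$ or equals $0_p$ in a critical region.
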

\begin{proof}
It is a straightforward consequence of \cite[Theorem 1.2]{JV}.
\end{proof}


The equality between c-balls and closed balls in a (connected) Finslerian 
manifold becomes equivalent to its  convexity (in the sense that any $p,q
\in M$ can be joined by a geodesic of minimum length). In general,  a WRS is 
called {\em w-convex} when this equality between balls hold. In a w-convex WRS,  an $F$-extremizing wind geodesic 
from $p$ to $q$ will  exist whenever there exists a
wind curve starting from $p$ and ending at $q$. 
A complete WRS is always w-convex, and even 
in this case some of their points might be non-connectable through wind curves
(see Figure \ref{Randers-KropinaFig}).
 \begin{figure}
 \centering
\begin{tikzpicture}[scale=0.5]
\draw[thick,->] (-7.5,0) -- (7.5,0) node[anchor=north west] {x axis};
\draw[thick,->] (0,-3.5) -- (0,3.5) node[anchor=south east] {y axis};
\foreach \x in {-6,-3,0,3,6}
    \draw (\x cm,1pt) -- (\x cm,-1pt) node[anchor=north] {$\x$};
\foreach \y in  {-3,3}
    \draw (1pt,\y cm) -- (-1pt,\y cm) node[anchor=east] {$\y$};
    \draw[dashed] (-6,3) -- (-6,-3);
    \draw[thick,blue] (-6,0) circle (1cm);
    \draw[dashed] (-3,3) -- (-3,-3);
    \draw[thick,blue] (-2,0) circle (1cm);
    \draw[thick,red,->] (-3,0) -- (-2,0);
    \draw[dashed] (3,3) -- (3,-3);
    \draw[thick,blue] (2,0) circle (1cm);
    \draw[thick,red,->] (3,0) -- (2,0);
    \draw[dashed] (6,3) -- (6,-3);
    \draw[thick,blue] (6,0) circle (1cm);
\end{tikzpicture}
\begin{tikzpicture}[scale=0.5]
    \begin{axis}[
        domain=-7:7,
        xmin=-7, xmax=7,
        ymin=-1.3, ymax=1.3,
        samples=400,
        axis y line=center,
        axis x line=middle,
    ]
     \addplot+[domain=-6:-3,blue,mark=none,samples=100] {-0.5*tanh(2*(-x-4.5))+0.5};
        \addplot+[domain=-3:3,blue,mark=none] {sin(deg(0.3333*0.5*3.1416*x+3.1416))}; 
        \addplot+[domain=3:6,blue,mark=none,samples=100] {-0.5*tanh(2*(-x+4.5))-0.5} node[above] {$f(x)$};
        \addplot+[domain=-7:7,red,dashed,mark=none] {1};
        \addplot+[domain=-7:7,red,dashed,mark=none] {-1};
        \addplot +[mark=none,red,dashed] coordinates {(-3, -1.3) (-3, 1.3)};
        \addplot +[mark=none,red,dashed] coordinates {(3, -1.3) (3, 1.3)};
    \end{axis}
\end{tikzpicture}
 \caption{\label{Randers-KropinaFig}Randers-Kropina metric $F$ in the Euclidean space $\R^2$ with $W_{(x,y)}=f(x)\partial_x$ and $|f|\leq 1$ as in the graph. The regions $x\leq -3$ and $x\geq 3$ are disconnected (non-connectable by wind curves), but the metric is complete since the forward and backward balls of radius $r$ are contained in the corresponding Euclidean balls with  radius $2r$ because $F(v)\leq 2|v|$ for every $v\in\R^2$. Moreover, the disconnected regions $x\leq -6$ and $x\geq 6$ are endowed with the Euclidean metric.}
 \end{figure}
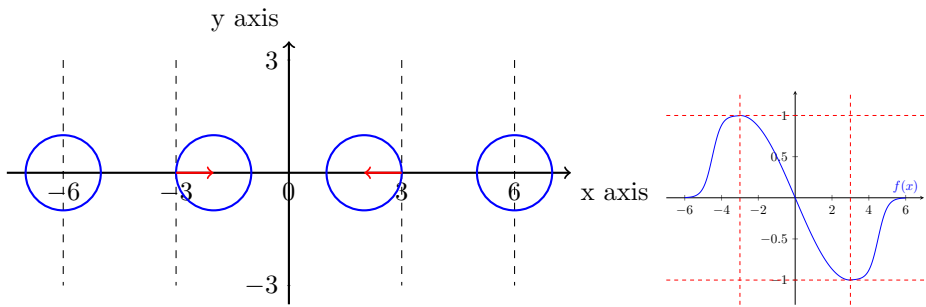
 These possibilities are very appealing from the SSTK viewpoint, as they are related to the existence of Killing horizons in globally hyperbolic spacetimes and other notions in the relativistic fauna. 
 

\section{Wind Riemannian Structures of constant flag curvature}\label{s3}

\subsection{Randers and Kropina solutions} \label{s3.1}
 Randers manifolds of constant flag curvature have been completely classified by   Bao, Robles and Shen  \cite[Th. 3.1]{BRS}. This result has a local nature, even though it can be used to obtain models of inextendible Randers spaceforms (some of them  necessarily incomplete  as Finslerian manifolds). Their result can be  summarized as follows.
\begin{thm}\label{tbrs}
A Randers metric $F$ has constant flag curvature $\kappa$ if and only if its Zermelo data $(g_R,W)$  satisfy  the following:
\begin{enumerate}[(i)]
\item the metric $g_R$ has constant curvature $\kappa+\frac{1}{4}\mu^2$ for some constant $\mu$,
\item the wind $W$ satisfies $g_R(W,W)<1$ and it is $\mu$-homothetic for $g_R$, namely, ${\mathcal L}_W g_R=2\mu g_R$, where $\mathcal L$ is the Lie derivative.
\end{enumerate}
Moreover, the only complete simply connected Randers spaceforms are:
\begin{enumerate}
\item[(0)] if $\kappa=0$, those with Zermelo data given by the Euclidean metric and a parallel vector field with norm less than $1$,
\item[(-)] if $\kappa<0$, the hyperbolic space of constant curvature $\kappa$,
\item[(+)] if $\kappa>0$, those with Zermelo data given by the round sphere of radius $1/\sqrt{\kappa}$ and a Killing vector field with norm less than $1$.
\end{enumerate}
\end{thm}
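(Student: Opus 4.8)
The plan is to follow the Zermelo navigation strategy of Bao--Robles--Shen \cite{BRS}, treating separately the \emph{local} characterization (the equivalence with conditions (i)--(ii)) and the \emph{global} list of complete simply connected models.

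For the local part I would first write the flag curvature of the Randers metric $F$ with navigation data $(g_R,W)$ as an explicit tensorial quantity built from $g_R$, $W$ and the $g_R$-covariant derivatives $\nabla W$, $\nabla^2W$: this is obtained by combining the classical formula for the flag curvature of an $\alpha+\beta$ metric with the dictionary relating $(\alpha,\beta)$ to the navigation pair $(g_R,W)$. After simplification one gets, for a flag with flagpole $y$ and transverse vector $V$ at $x$,
\be\label{planK}
\mathbf{K}(x;y,V)=\overline{K}_{g_R}(x;y,V)+\Phi\!\left(x;y,V;\,\mathcal{D},\mathcal{R},\nabla\mathcal{D},\nabla\mathcal{R}\right),
\ee
where $\overline{K}_{g_R}$ is the sectional curvature of $g_R$ and $\mathcal{D}:=\tfrac12\mathcal{L}_Wg_R$, $\mathcal{R}$ denote the symmetric and antisymmetric parts of $\nabla W$. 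The ``if'' implication is then a substitution: putting $\mathcal{D}=\mu g_R$ and $\overline{K}_{g_R}\equiv\kappa+\tfrac14\mu^2$ into \eqref{planK}, a direct computation shows that all flag-dependence cancels and that $\Phi$ contributes exactly $-\tfrac14\mu^2$, so $\mathbf{K}\equiv\kappa$. For the ``only if'' implication I would fix $x$ and exploit the dependence of \eqref{planK} on $(y,V)$: imposing that the right-hand side be flag-independent forces (after some work) the symmetric part of $\nabla W$ to be a multiple of $g_R$, i.e.\ $\mathcal{L}_Wg_R=2\sigma g_R$ for a function $\sigma$, and then forces the $d\sigma$-contributions to vanish, so $\sigma\equiv\mu$ is constant; feeding this back and letting $x$ vary shows $\overline{K}_{g_R}$ is constant, and matching constants in \eqref{planK} gives $\overline{K}_{g_R}=\kappa+\tfrac14\mu^2$. (Throughout, $g_R(W,W)<1$ is just the requirement that $\Sigma=S_R+W$ be a genuine Randers indicatrix.) \emph{This ``only if'' step is the main obstacle}: one has to extract two strong tensorial identities from the vanishing of a single scalar ``curvature discrepancy'', which is precisely the delicate direction-by-direction analysis carried out in \cite{BRS}.

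For the global part, assume $F$ is a complete simply connected Randers spaceform of flag curvature $\kappa$, with data $(g_R,W)$ and $g_R(W,W)<1$. One first checks (as in \cite{BRS}) that forward and backward $F$-completeness, together with $g_R(W,W)<1$, force $g_R$ to be complete; being also simply connected and, by the local part, of constant curvature $c=\kappa+\tfrac14\mu^2$, $(M,g_R)$ is isometric to $\R^n$, to a hyperbolic space, or to a round sphere. Next I would rule out a proper homothety: a space of constant curvature carrying a $\mu$-homothetic field with $\mu\neq0$ must be flat, since along the flow one has $\phi_t^*g_R=e^{2\mu t}g_R$, which would rescale the (constant) curvature by $e^{-2\mu t}$; and on flat $\R^n$ a $\mu$-homothetic field is $\mu(x-x_0)$ plus a Killing field, hence of unbounded norm, contradicting $g_R(W,W)<1$. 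Therefore $\mu=0$: $W$ is Killing and $g_R$ has curvature exactly $\kappa$. If $\kappa>0$, $g_R$ is the round sphere of radius $1/\sqrt{\kappa}$ and $W$ is an arbitrary Killing field with $g_R(W,W)<1$ (case $(+)$); if $\kappa=0$, $g_R$ is Euclidean and a bounded Killing field of $\R^n$ is necessarily parallel, i.e.\ a constant vector of norm $<1$ (case $(0)$); if $\kappa<0$, $g_R$ is the hyperbolic space of curvature $\kappa$, and since every nonzero one-parameter group of isometries of hyperbolic space displaces some points arbitrarily far --- so hyperbolic space has no nonzero bounded Killing field --- one gets $W\equiv0$ and $F=g_R$ (case $(-)$). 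Conversely, each of the three families is a complete simply connected Randers spaceform: completeness is automatic on the compact sphere, clear for the translation-invariant metric on $\R^n$, and trivial for hyperbolic space. The two points requiring most care in this part are the implication ``$F$ complete $\Rightarrow g_R$ complete'' (because of the asymmetry of Randers metrics) and the sharp fact that hyperbolic space carries no nontrivial bounded Killing field.
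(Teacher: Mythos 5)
First, a remark on scope: the paper does not actually prove this theorem; it is quoted from Bao--Robles--Shen \cite[Th.~3.1]{BRS} as known input for the WRS classification, so there is no internal proof to compare against. Judged on its own, your sketch of the local equivalence is a faithful description of the navigation-data strategy, but note that the entire content of the ``only if'' direction is the step you explicitly defer (extracting $\mathcal{L}_Wg_R=2\mu g_R$ with $\mu$ constant, and constancy of the curvature of $g_R$, from flag-independence of one scalar); as written, that half is an outline of \cite{BRS} rather than an argument.

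The genuine gap is in your global part, at its very first step. The claim that forward and backward $F$-completeness together with $g_R(W,W)<1$ force $g_R$ to be complete is \emph{false} as a general implication, and the paper itself exhibits a counterexample in Example \ref{ex1}: on $M=\R^+$ with the incomplete metric $g_R=dx^2$ one can choose $W=f\partial_x$, $|f|<1$, so that the resulting Randers metric is forward and backward complete. The obstruction is that the only available comparison is $F(v)\leq |v|_R/(1-|W|_R)$, which degenerates as $|W|_R\to 1$, so $g_R$-balls need not be contained in $F$-balls. In the constant flag curvature setting the conclusion does hold, but for a structural reason that must be supplied: by the local part $W$ is $\mu$-homothetic. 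If $W$ is Killing for $g_R$ (automatic when the constant curvature is nonzero, by Proposition \ref{p4.4}(i)), then its flow preserves $\Sigma=S_R+W$, so $W$ is a complete Killing field of $F$ (completeness from Theorem \ref{th_compl}(i)), and the translated structure $\Sigma-W$, i.e.\ $g_R$ itself, is complete by Theorem \ref{JVresult}. If $W$ is properly homothetic, your norm estimate $|(\mu I+A)x+b|\to\infty$ is the right idea for a contradiction with $|W|_R<1$, but it presupposes that $(M,g_R)$ is globally $\R^n$, which again requires the completeness you have not yet established; the proof of Theorem \ref{thm:globalresult} shows how to close this loop by using the contracting flow of $W$ to locate a fixed point and then expanding a compact ball. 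Once $g_R$-completeness is secured by such an argument, the remainder of your case analysis (bounded Killing fields of $\R^n$ are parallel, $\HH^n$ carries no nonzero bounded Killing field, any Killing field of the sphere can occur) is correct and matches the facts collected in Section \ref{s4.1}.
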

Observe that given a vector field in the sphere, there is a multiple of it with norm less than $1$. However, completeness excludes the possibility of being properly homothetic for $W$, as well as most of the cases of Randers metrics that are not Riemannian for $\kappa\leq 0$.
About the  Kropina case with Zermelo data $(g_R,W)$,  Yoshikawa and Okubo  \cite[Th. 4]{YoOk07} solution becomes a natural extension of Bao et al.'s: $g_R$ must have constant curvature but now $W$ must be a homothetic  vector field satisfying $| W|_R\equiv 1$ (in particular, $W$  is then necessarily Killing,  see below). The corresponding explicit cases were obtained by Yoshikawa, and Sabau  \cite{YoSa}. 
For the convenience of the reader, we include some results on homothetic and Killing vector fields in the Appendix so that one can  rewrite the concrete Kropina solutions \cite{YoOk07, YoSa} as follows.

\begin{thm}\label{th_KropClassif1}  A Kropina manifold $(M,F)$ of constant flag curvature is determined by Zermelo data $(g_R,W)$ which lie in one of the following two cases:

\begin{enumerate}[(i)]
\item $g_R$ is flat and $W$ is parallel and unit,
\item $g_R$ is locally isometric to an odd round sphere $S^{2m+1}(r)$ and $W$ is a unit Killing vector field, i.e. a unit Hopf vector field.
\end{enumerate}
\end{thm}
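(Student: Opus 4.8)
The plan is to reduce the statement to the analytic classification of Kropina metrics of constant flag curvature due to Yoshikawa and Okubo, and then to translate it, via the facts on Killing fields recalled in the appendix, into the geometric normal form (i)--(ii). First I would recall from \cite[Th.~4]{YoOk07} that a Kropina metric with Zermelo data $(g_R,W)$ (so that $|W|_R\equiv 1$ by the very definition of the Kropina case) has constant flag curvature if and only if $g_R$ has constant sectional curvature $c$ and $W$ is a homothetic field of $g_R$. The first step is then to promote ``homothetic'' to ``Killing'': if $\mathcal L_Wg_R=2\mu g_R$ and $\phi_t$ denotes the local flow of $W$, then $\phi_t^*g_R=e^{2\mu t}g_R$, and since the flow of a vector field preserves the field itself ($\phi_{t*}W=W$) one gets $g_R(W,W)\circ\phi_t=e^{2\mu t}\,g_R(W,W)$; as $g_R(W,W)\equiv 1$, this forces $\mu=0$, so $W$ is a \emph{unit Killing field} of a space form --- precisely the situation analyzed in subsection~\ref{s4.2}.

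Next I would feed this into the Bochner identity for a Killing field of constant norm, which gives $|\nabla W|_R^2=\mathrm{Ric}(W,W)=(n-1)c$ (with $n=\dim M\ge 2$ and the norm normalized to $1$). If $c<0$ this is impossible, so that case does not occur. If $c=0$, then $\nabla W=0$: $g_R$ is flat and $W$ is parallel and unit, which is case (i). If $c>0$, then $g_R$ is locally isometric to $S^n(r)$ with $r=1/\sqrt c$, realized as a hypersurface of $\R^{n+1}$, and every Killing field of $g_R$ is the restriction of a linear field $x\mapsto Ax$ with $A\in\mathfrak{so}(n+1)$. Bringing $A$ to block-diagonal normal form, with rotation parameters $\theta_1,\dots,\theta_k$ and a possible kernel, the requirement that $|Ax|_R^2$ be constantly equal to $1$ on the whole sphere forces $A$ to be invertible --- so $n+1$ is even, i.e.\ $n=2m+1$ --- and all the $\theta_i$ to coincide and equal $1/r$; equivalently, identifying $\R^{2m+2}\cong\mathbb{C}^{m+1}$, $A$ is conjugate to multiplication by $i/r$, so $W$ is a unit Hopf vector field, which is case (ii). Finally I would observe that the converse --- that each of (i) and (ii) does produce a Kropina metric of constant flag curvature --- is the content of the explicit list of Yoshikawa and Sabau \cite{YoSa}.

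I expect the genuinely delicate point to be the sphere case: one must exploit that $|Ax|_R^2$ is required to be \emph{constant} on $S^n(r)$, not merely bounded, since it is exactly this that simultaneously excludes a nontrivial kernel of $A$ (hence forces the dimension to be odd) and forces all the rotation parameters to agree; and one must then recognize the resulting $A$ as a scaled complex structure in order to identify $W$ with a Hopf field. Everything else --- the flow argument that kills the homothety constant, the Bochner obstruction in nonpositive curvature, and the affine/linear description of Killing fields of space forms --- is routine and can be imported verbatim from the appendix (Section~\ref{s4}).
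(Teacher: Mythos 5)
Your proposal is correct and follows the same overall skeleton as the paper: reduce to the Yoshikawa--Okubo normal form \cite{YoOk07} ($g_R$ of constant curvature $c$ and $W$ homothetic with $|W|_R\equiv 1$), upgrade ``homothetic'' to ``Killing'', and then classify unit Killing fields on space forms, with the converse supplied by \cite{YoSa}. The paper's own proof consists of exactly two citations to the appendix, Proposition \ref{p4.4} for the Killing step and Proposition \ref{p4.2} for the classification, so the comparison really concerns how you establish those two lemmas. For the Killing step, the paper rules out $\mu\neq 0$ via the scaling of the curvature under the homothetic flow (which only works when $c\neq 0$) and needs a separate affine-coordinates computation to handle the flat case; your observation that $\phi_{t*}W=W$ together with $\phi_t^*g_R=e^{2\mu t}g_R$ gives $|W|_R^2\circ\phi_t=e^{2\mu t}|W|_R^2$, hence $e^{2\mu t}\equiv 1$, kills $\mu$ uniformly in all curvatures using only $|W|_R\equiv 1$ --- a cleaner and more economical argument. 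Your Bochner step $|\nabla W|_R^2=\mathrm{Ric}(W,W)=(n-1)c$ is just the traced form of Lemma \ref{p4.1} and handles $c\le 0$ exactly as the paper does. For $c>0$, the paper excludes even dimensions by the skew-adjointness of $\nabla W$ on $W^{\perp}$ (an argument valid for arbitrary positive curvature) and then cites Wiegmink \cite{Wieg} for the Hopf identification, whereas you carry out the explicit normal-form analysis of $A\in\mathfrak{so}(n+1)$; your version is self-contained and correctly isolates the decisive point (that \emph{constancy}, not mere boundedness, of $|Ax|$ on the sphere simultaneously excludes a kernel of $A$ and equalizes the rotation angles), at the cost of being specific to the round sphere and of a small unstated reduction from ``locally isometric to $S^{2m+1}(r)$'' to the global sphere. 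Both routes prove the same theorem; yours is more elementary where the paper outsources to the literature.
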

\begin{proof}
   By Proposition \ref{p4.4}, $W$ must be a Killing field. Moreover, Proposition \ref{p4.2} determines the two possibilities of the theorem. 
\end{proof}
Again the result  has a local nature,
 but  all the local examples of Theorem \ref{th_KropClassif1} can be extended to complete simply connected models.

\begin{thm}\label{th_KropClassif2} The complete simply connected Kropina manifolds $(M,F)$ of constant flag curvature lie in one of the following two cases, up to isometries:

\begin{enumerate}[(i)]
\item $(M,g_R)=\R^n$ is flat and $W$ is a parallel, unit vector field of $\R^n$.
\item $(M,g_R)=\R^n= S^{2m+1}(r)$ and $W$ is a unit Hopf vector field.
\end{enumerate}
In particular, $W$ is also a complete geodesic vector field.
\end{thm}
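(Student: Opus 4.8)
The plan is to promote the purely local Theorem~\ref{th_KropClassif1} to the global statement; the whole content lies in proving that $(M,g_R)$ is geodesically complete as a Riemannian manifold, after which the Killing--Hopf theorem fixes the model and the shape of $W$ is read off from Theorem~\ref{th_KropClassif1}. First I would record what the local classification already gives on all of $M$: the Zermelo data satisfy $|W|_R\equiv1$, $g_R$ has constant sectional curvature $c\ge0$, and $W$ is a unit Killing field for $g_R$ (in fact parallel when $c=0$), with $\dim M=2m+1$ and $W$ locally a Hopf field when $c>0$. In particular $\nabla^{g_R}_WW=0$ and, as recalled in the Appendix (\S\ref{s4.2}), the integral curves of such a unit Killing field are, after the affine reparametrization making $F(\dot\gamma)\equiv1$, geodesics of the conic (Kropina) Finsler metric $F$ associated with $\Sigma$, i.e.\ geodesics of the WRS $\Sigma$.

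From this I would first deduce that $W$ is a complete vector field: a maximal integral curve of the nowhere-vanishing field $W$ cannot converge in $M$ at a finite endpoint, so it is inextendible as a geodesic of $\Sigma$, and completeness of $\Sigma$ forces it to be defined on all of $\R$; hence the flow of $W$ is complete.

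The key step is then to upgrade this to completeness of $(M,g_R)$, and for this I would apply Theorem~\ref{JVresult} ``in reverse''. Since $W$ is Killing for $g_R$ and $\mathcal{L}_WW=0$, the flow of $-W$ preserves the Zermelo data $(g_R,W)$, so $-W$ is a Killing (hence homothetic) field of $F$; it is trivially homothetic for $F_l$, which in the Kropina case is identically $\infty$ because there is no strong-wind region ($\Lambda\equiv0$). As $-W$ is also complete, Theorem~\ref{JVresult} applied to the WRS $\Sigma$ with the field $-W$ shows that $\Sigma+(-W)$ is a complete WRS. But $\Sigma+(-W)=(S_R+W)+(-W)=S_R$ is nothing but the WRS associated with $g_R$ alone (zero wind), whose geodesics are precisely the $g_R$-geodesics; therefore $(M,g_R)$ is geodesically complete.

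Finally, $(M,g_R)$ being a complete, simply connected Riemannian manifold of constant curvature $c\ge0$, the Killing--Hopf theorem gives $(M,g_R)\cong\R^n$ if $c=0$ and $(M,g_R)\cong S^n(r)$ with $r=1/\sqrt c$ if $c>0$. When $c=0$, $W$ is a parallel unit field on $\R^n$, hence a unit constant field, i.e.\ $\partial_{x_1}$ up to an isometry: this is case~(i). When $c>0$, Theorem~\ref{th_KropClassif1}(ii) forces $n=2m+1$, and a unit Killing field on the whole of the simply connected $S^{2m+1}(r)$ is a unit Hopf vector field (Appendix, Prop.~\ref{p4.2}): this is case~(ii). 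The last assertion of the theorem then follows at once, the integral curves of $W$ being the complete unit-speed $g_R$-geodesics of these spaces (straight lines, resp.\ Hopf great circles) and, reparametrized, $F$-geodesics. The only non-formal point in the argument is the completeness step: everything hinges on recognizing the reversed wind $-W$ as a complete Killing field of the conic metric $F$, which is exactly what makes Theorem~\ref{JVresult} applicable and converts the hypothesis ``$\Sigma$ complete'' into the missing ingredient ``$g_R$ complete''.
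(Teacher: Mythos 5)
Your proof is correct and follows essentially the route the paper itself takes (the paper gives no separate proof of this theorem; the completeness step you identify as the crux is exactly the Killing case of the paper's proof of Theorem \ref{thm:globalresult}): deduce completeness of $W$ from completeness of $\Sigma$, apply Theorem \ref{JVresult} to the reversed wind $-W$ to conclude that $\Sigma-W=S_R$, i.e.\ $g_R$, is complete, and then invoke Killing--Hopf together with the local classification of Theorem \ref{th_KropClassif1} and Proposition \ref{p4.2}. The only imprecision is your justification that $W$ is complete: the Appendix (Lemma \ref{p4.1}) only asserts that a constant-norm Killing field is geodesic for $g_R$, not that its integral curves are geodesics of the conic metric $F$ (that is true here, but would need an argument); this does not matter, however, because the integral curves of $W$ are in any case wind curves, and the paper's Theorem \ref{th_compl}$(i)$ derives the completeness of $W$ from that of $\Sigma$ precisely by that wind-curve/compact-ball argument, which you could cite instead.
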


\begin{rem}
As emphasized in subsection \ref{sec:balls}, completeness  always means ``forward and backward completeness''. Otherwise, further possibilities appear; for example, in the case $(i)$  the half space $x_n>0$ of $\R^n$ with $W=\partial_{x_n}$ is forward complete.
\end{rem}

\subsection{Local classification for the WRS case}\label{s3.2}
 The following result relates the preservation of WRS's, Finslerian metrics and Zermelo data in the spirit of  \cite[Proposition 1]{BRS} 
(other background results about the indicatrices of wind Riemanian and wind Finslerian structures can be seen in Prop. 2.12, 2.13 and 2.46 of \cite{CJSwind}.

\begin{lemma}\label{l3.5}  Let $(M_i,\Sigma_i), i=1,2$ be two manifolds endowed with a WRS, each one with the associated Riemannian metric $g_i$,  wind $W_i$ and conic  pseudo-Finsler metrics  $F^i$,  $F^i_l$.  For any diffeomorphism $\phi: M_1\rightarrow M_2$, the following conditions are equivalent: 

 (a)  $\phi_*(\Sigma_1)=\Sigma_2$.

 (b)  $\phi$ is an isometry from $(M, g_1)$ to $(M,g_2)$ and $\phi_*(W_1)=W_2$.

 (c)  $\phi^*(F^2)=F^1$  and $\phi^*(F^2_l)=F^1_l$. 

 \smallskip
 
 In any of the previous cases, we will say that  $\phi$ is a {\em (WRS) isometry}, and  $(M_1,\Sigma_1)$ and $(M_2,\Sigma_2)$ are called {\em isometric}. 
\end{lemma}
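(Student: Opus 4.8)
The plan is to prove the cyclic chain of implications $(a)\Rightarrow(b)\Rightarrow(c)\Rightarrow(a)$, exploiting the univocal correspondence between a WRS $\Sigma$ and its Zermelo data $(g_R,W)$, and the explicit formulas \eqref{conicF}, \eqref{hmetric}, \eqref{lorentzF}. The key elementary observation is that for a WRS $\Sigma = S_R + W$, at each point $p$ the data $(g_R,W)$ can be recovered from $\Sigma_p\subset T_pM$ purely algebraically: the vector $W_p$ is the ``center'' of the quadric $\Sigma_p$ (e.g.\ the average of the two points where any line through an appropriate direction meets $\Sigma_p$, or intrinsically the unique vector such that $\Sigma_p - W_p$ is a quadric centered at the origin), and then $S_R = \Sigma_p - W_p$ is the $g_R$-indicatrix, which determines $g_R$ at $p$. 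This recovery is natural, hence commutes with diffeomorphisms.

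For $(a)\Rightarrow(b)$: since $\phi$ is a diffeomorphism, $\phi_*\colon T_pM_1 \to T_{\phi(p)}M_2$ is a linear isomorphism for each $p$, and $\phi_*(\Sigma_1)=\Sigma_2$ means $\phi_*((\Sigma_1)_p) = (\Sigma_2)_{\phi(p)}$. Applying the algebraic recovery of the center, $\phi_*$ must send the center $W_1(p)$ of $(\Sigma_1)_p$ to the center $W_2(\phi(p))$ of $(\Sigma_2)_{\phi(p)}$, i.e.\ $\phi_*(W_1)=W_2$; and then $\phi_*$ sends the centered quadric $(S_{R})_1$ at $p$ to $(S_R)_2$ at $\phi(p)$, which forces $\phi^*(g_2)=g_1$, i.e.\ $\phi$ is an isometry. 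For $(b)\Rightarrow(c)$: this is immediate from the closed-form expressions, since $F^i$ and $F^i_l$ in \eqref{conicF}, \eqref{lorentzF} (together with $h$ in \eqref{hmetric} and the domain restrictions \eqref{Arestriction}) are built solely out of $g_R$-norms, $g_R(W,\cdot)$ and $\Lambda = 1-|W|_R^2$; pulling back by an isometry $\phi$ with $\phi_*(W_1)=W_2$ turns the formula for $F^2$ into the formula for $F^1$, and likewise for $F^i_l$, and the domains $A_p$, $(A_l)_p$ correspond as well. For $(c)\Rightarrow(a)$: one recovers $\Sigma$ from $F$ (and, in the strong-wind region, $F_l$) as the closure of the unit level set together with the appropriate boundary pieces — in the mild and critical regions $\Sigma_p = \overline{(F^{-1}(1))_p}$, while in the strong-wind region $\Sigma_p$ is the union of $F^{-1}(1)$, $F_l^{-1}(1)$ and the arc $\Sigma_p\cap\mathcal C_p$ of the $h$-cone, as recalled in Section~\ref{s2}; since $\phi^*(F^2)=F^1$ and $\phi^*(F^2_l)=F^1_l$, this reconstruction is preserved by $\phi$, giving $\phi_*(\Sigma_1)=\Sigma_2$.

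The main technical point — and the only one requiring a little care rather than a one-line formula — is the algebraic recovery of $(g_R,W)$ from $\Sigma_p$ in $(a)\Rightarrow(b)$, since the ``indicatrix'' $\Sigma_p$ changes character across the mild/critical/strong regions (an ellipsoid displaced off-center, a paraboloid, or a two-sheeted hyperboloid with a selected cone) and one must check the center is still intrinsically characterized in each case. This is precisely the content of the uniqueness claim ``$(g_R,W)$ both univocally determined'' asserted when WRS's were defined, and of \cite[Prop.~2.13]{CJSwind}; so I would simply invoke that uniqueness: a diffeomorphism carrying $\Sigma_1$ to $\Sigma_2$ carries the (unique) Zermelo data of the first to those of the second, which is $(b)$. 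With that in hand the remaining implications are routine verifications on the formulas. I would also remark that $(b)\Leftrightarrow(c)$ is essentially \cite[Proposition~1]{BRS} in the Randers setting and extends verbatim here because the extra ingredient $F_l$ is itself determined by $F$ (as noted after \eqref{lorentzF}), so in fact requiring $\phi^*(F^2)=F^1$ alone already implies $\phi^*(F^2_l)=F^1_l$ on the strong-wind region.
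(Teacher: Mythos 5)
Your proposal is correct and follows essentially the same route as the paper: the authors likewise prove $(a)\Rightarrow(b)$ by observing that $W_p$ is the centroid of the region bounded by $\Sigma_p$ (so that $\Sigma_p-W_p$ is the centered $g_R$-sphere) and that the linear map $\phi_{*p}$ preserves centroids, then obtain $(b)\Rightarrow(c)$ directly from the formulas \eqref{conicF}, \eqref{hmetric}, \eqref{lorentzF} and dismiss $(c)\Rightarrow(a)$ as straightforward. One small correction to your final paragraph: $\Sigma_p=S_R+W_p$ is \emph{always} a displaced ellipsoid, in every wind regime --- it is the level set $F^{-1}(1)$ and the domain $A_p$, not $\Sigma_p$ itself, that change character across the mild/critical/strong regions --- so the case-by-case check you flag as the delicate point is unnecessary and the centroid characterization works uniformly (your fallback to the uniqueness of the Zermelo data is nevertheless valid).
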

\begin{proof}
 For $(a)\Rightarrow  (b) $ observe that $W_i$ is the centroid of the interior of $\Sigma_i$, because translating $\Sigma_i$ with $-W_{i}$ produces the  sphere $S_i$ of $g_i$, which is centered in the origin. Moreover, a linear map preserves the centroid, therefore $\phi_*(W_1)=W_2$, and then $\phi_*(S_1)=S_2$, which means that $\phi$ is  an isometry from $(M, g_1)$ to $(M,g_2)$. The implication   $(b)\Rightarrow (c)$   follows directly from the expression of $F^i$ and $F^i_l$ in terms of $g_i$ and $W_i$ (see 
 formulas \eqref{conicF}, \eqref{hmetric}, 
\eqref{lorentzF}) and  $(c)\Rightarrow (a)$  is straightforward. 
\end{proof}
 The strong convexity (resp. concavity)   of $F$ (resp. $F_l$) allows one  to define the flag curvature for any flagpole in $A$ (resp. $A_l$). So, one has the natural notion. 
\begin{defi}
 A WRS has constant flag curvature if its associated conic metrics $F$ and $F_l$ has constant flag curvatures for all the  flagpoles
in $A$ and $A_l$, resp.
 \end{defi}
\begin{rem}\label{rjv}  The relation  between the  flag curvature of  (conic) pseudo-Finsler metrics in the same manifold and with indicatrices which are the same up to translation by an arbitrary homothetic vector of one of the pseudo-Finsler metrics  has been studied systematically in \cite{JV}. In particular, it is checked that if $(M,g_R)$ has constant curvature  and $W$ is a homothetic vector, then the corresponding WRS, $\Sigma=S_R+W$, has constant flag curvature, as in the Randers case \cite[Corollary 5.1]{JV}.
\end{rem}
\begin{thm}[Local classification of WRS]\label{tlocal} 
A WRS $(M,\Sigma)$ has constant flag curvature $\kappa$ if and only if its Zermelo data $(g_R,W)$ satisfy  the following:
\begin{enumerate}[(i)]
\item the Riemannian metric $g_R$ has constant curvature $\kappa+\frac{1}{4}\mu^2$ for some constant $\mu$,
\item the wind $W$  is $\mu$-homothetic for $g_R$, namely, ${\mathcal L}_W g_R=2\mu g_R$, where $\mathcal L$ is the Lie derivative.
\end{enumerate}
\end{thm}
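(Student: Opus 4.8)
The plan is to reduce the theorem to a combination of the classical Randers and Kropina classifications (Theorems~\ref{tbrs} and~\ref{th_KropClassif1}) together with the curvature comparison for translated indicatrices recorded in Remark~\ref{rjv}. First I would prove the ``if'' direction: assuming $g_R$ has constant curvature $\kappa+\frac14\mu^2$ and $W$ is $\mu$-homothetic, I invoke Remark~\ref{rjv} (i.e.\ \cite[Corollary~5.1]{JV} and the surrounding analysis in \cite{JV}) to conclude that the conic pseudo-Finsler metric $F$ obtained by translating the $g_R$-indicatrix by $W$ has constant flag curvature $\kappa$ on all of $A$; since $F_l$ is univocally determined by $F$ and has the same underlying data (it is the ``other branch'' of the translated sphere inside the Lorentzian cones in the strong-wind region $M_l$), the same argument — or an $F\leftrightarrow F_l$ symmetry in the computation of \cite{JV} — gives that $F_l$ has constant flag curvature $\kappa$ on $A_l$. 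By definition this is precisely the statement that $(M,\Sigma)$ has constant flag curvature $\kappa$.

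For the ``only if'' direction I would argue locally, partitioning $M$ into the open region of mild wind $M_m=\{\Lambda>0\}$, the open region of strong wind $M_l=\{\Lambda<0\}$, and the critical set $\Lambda=0$. On $M_m$, $\Sigma$ restricts to a genuine Randers metric of constant flag curvature $\kappa$, so Theorem~\ref{tbrs} applies: there is a local constant $\mu$ with $g_R$ of curvature $\kappa+\frac14\mu^2$ and $\mathcal L_W g_R = 2\mu g_R$. On $M_l$, $F_l$ is a (Lorentz-)Finsler metric of constant flag curvature $\kappa$ whose ``indicatrix'' is again a translate of the $g_R$-sphere by $W$; running the BRS-type computation of \cite{BRS} (or its pseudo-Finslerian version in \cite{JV}) in this signature yields the same two conditions, with some local homothety constant $\mu$. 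The key gluing observation is that the homothety constant is globally well-defined and locally constant: wherever $W$ is homothetic with factor $\mu$, one has $\mu = \frac1{2n}\,\mathrm{div}_{g_R} W$ (trace of $\mathcal L_W g_R$), a smooth function on all of $M$ that is forced to be locally constant on the open dense set $M_m\cup M_l$; by continuity it is constant on each connected component, and since $M$ is connected, $\mu$ is a single constant. Then $g_R$ has constant curvature $\kappa+\frac14\mu^2$ on the open dense set $M_m\cup M_l$, hence everywhere by continuity of the sectional curvature, and $\mathcal L_W g_R = 2\mu g_R$ holds on a dense set, hence everywhere. This also covers the critical set, where $\Sigma$ is Kropina: Theorem~\ref{th_KropClassif1} is consistent with (indeed a special case of) conditions (i)--(ii) with $\mu=0$ on that locus, and compatibility is automatic once $\mu$ is known to be globally constant.

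The main obstacle I expect is the strong-wind case: one must be sure that the flag-curvature computation underlying the Randers classification of \cite{BRS} genuinely transfers to $F_l$, which is a \emph{conic pseudo-Finsler} metric of Lorentzian type rather than a positive-definite Randers metric, and that the ``indicatrix $=$ translated sphere'' structure is enough to rerun it. This is exactly the content that \cite{JV} was designed to supply (see Remark~\ref{rjv}), so in the write-up I would lean on \cite[Corollary~5.1]{JV} and the general translated-indicatrix framework there rather than redoing the PDE analysis; the remaining work is the bookkeeping of the homothety constant across the mild/strong/critical partition, which is routine given connectedness and density of $M_m\cup M_l$. A minor secondary point to check is that the critical set has empty interior (or, if it does not, that conditions (i)--(ii) still hold there by continuity and by the Kropina classification), so that ``dense open'' arguments are legitimate.
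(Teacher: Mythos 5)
Your proposal follows essentially the same architecture as the paper's proof: the ``if'' direction via Remark~\ref{rjv} (i.e.\ \cite[Corollary 5.1]{JV}), and the ``only if'' direction by running the Bao--Robles--Shen computation away from the critical set, invoking the Kropina classification (Theorem~\ref{th_KropClassif1}) on the interior of $\{g_R(W,W)=1\}$, and gluing over the open dense complement of $\partial U$ by continuity/closedness of the two conditions. Two points of comparison are worth recording. First, in the strong-wind region the paper does \emph{not} pass to $F_l$: it applies the BRS computation directly to the conic metric $F$, which is still of Randers type (same Zermelo expression \eqref{conicF}) wherever $g_R(W,W)\neq 1$; the genuinely delicate point is that the computations of \cite[Th.~3.1]{BRS} persist for $|W|_R>1$, which the authors state as an observation (confirmed by Bao, per the acknowledgements) rather than derive from \cite{JV}. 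Your fallback of extracting this from \cite{JV} does not quite close the gap, since Remark~\ref{rjv} only supplies the ``if'' direction (constant curvature plus homothetic wind implies CFC), not the converse; so in a careful write-up you would either rerun the BRS analysis for the conic $F$, as the paper does, or carry out the analogous verification for $F_l$, which is extra work for no gain. Second, your gluing of the homothety constant via $\mu=\tfrac{1}{2n}\,\mathrm{tr}_{g_R}(\mathcal{L}_W g_R)$, a globally defined smooth function that is locally constant on the dense open set, is more explicit than the paper's terse ``both conditions are closed'' remark and is a welcome clarification; it also settles automatically the compatibility with $\mu=0$ on any open Kropina region, exactly as you note.
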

\begin{proof}
 First, observe that the local computation 
 in \cite[Th. 3.1]{BRS} remains valid for the conic  Finsler metric  $F$ 
 in the open subset where $g_R(W,W)\not=1$, since in this subset $F$  
 is of Randers type. 
 Using this observation and Theorem ~\ref{th_KropClassif1} we deduce that locally $g_R$ has constant curvature and $W$ is $\mu$-homothetic in an open dense subset: the manifold $M$ except the points in the boundary  $\partial U$  of the region  $U$ where  $g_R(W,W)\not=1$. As both conditions are closed we deduce that $g_R$ is of constant curvature and $W$ is $\mu$-homothetic in every connected component  of $M\setminus \partial U$ and, by continuity, on all $M$. Thus, the result follows taking into account Remark \ref{rjv}. 
%
\end{proof}
\begin{rem}\label{ryo}
We have used the Kropina classification \cite{YoOk07, YoSa} in our proof. However, one could consider  the expression  \eqref{conicF}  for the conic Finsler metric, which remains valid in all the regions, and try to carry out all the Randers-type computations in order to reprove the  Randers-Kropina case.
\end{rem}
\subsection{The global result}\label{s3.3}

In order to go from the local to the global result,  the following theorem on completeness becomes crucial, and the Example \ref{ex1} below is important  to know exactly what is going on.
\begin{thm}\label{th_compl}
Let $\Sigma$ be a WRS determined by Zermelo data $(g_R, W)$.
\begin{enumerate}[(i)]
\item If $\Sigma$ is complete, then $W$  is complete.

\item  If $g_R$ is complete and $W$ is a homothetic vector field, then $\Sigma$ is complete.
Moreover, if $W$ is properly homothetic (i.e., non-Killing), then $g_R$ is flat.
\end{enumerate}
\end{thm}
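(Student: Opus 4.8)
The plan is to prove each direction by passing to the associated SSTK spacetime $(\R\times M, g)$ and invoking the correspondence between geodesic completeness of $\Sigma$ and global hyperbolicity of the spacetime, together with Theorem \ref{JVresult} for the positive direction. For part (i), I would argue contrapositively: suppose the homothetic (in fact, for part (i) we only know $W$ is a vector field, not yet that it is homothetic, so the statement must be read as: completeness of $\Sigma$ forces completeness of $W$ as a flow). The key point is that $W$ (more precisely $-W$, or $W$ up to sign and scaling) appears as the Finslerian shadow of the Killing field $K=\partial_t$ of the SSTK spacetime; an integral curve of $K$ projects to a point of $M$, but an incomplete integral curve of $W$ on $M$ would correspond to a causal curve in the spacetime escaping to infinity in finite parameter. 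Concretely, I would take an integral curve $\rho:[0,b)\to M$ of $W$ with $b<\infty$ maximal, and build from it a wind curve (or a lightlike geodesic of the spacetime) that is inextendible but defined on a bounded interval — roughly, lift $\rho$ to the spacetime using that $(1,W_{\rho(t)})$ is causal for $g$ (this is where $g((1,W),(1,W)) = -\Lambda + 2\omega(W) + g_R(W,W) = -(1-|W|_R^2) - 2|W|_R^2 + |W|_R^2 = -1 \le 0$ shows causality, in fact timelike). Then invoke Theorem \ref{c63} (precompactness of wind balls) or the lightlike-geodesic description to contradict completeness of $\Sigma$.

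For part (ii), the first assertion is essentially a direct application of Theorem \ref{JVresult}: if $g_R$ is complete, then the WRS $\Sigma_0 := S_R$ associated with Zermelo data $(g_R, 0)$ is just the Riemannian metric $g_R$ itself, which is a complete WRS; since $W$ is homothetic for $g_R$, it is a homothetic field for the conic pseudo-Finsler metrics associated with $\Sigma_0$ (these are $g_R$ and $F_l\equiv\infty$, trivially), and it is complete as a flow because a homothetic field of a complete Riemannian metric is complete — this uses the standard fact (cf. the Appendix, Section \ref{s4}) that the flow of a homothetic field on a complete Riemannian manifold is globally defined. Hence $\Sigma = \Sigma_0 + W$ is a complete WRS by Theorem \ref{JVresult}. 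I should double-check the hypothesis of Theorem \ref{JVresult} that $W$ is homothetic for $F$ and $F_l$ of $\Sigma_0$; since $F$ for $\Sigma_0$ is $g_R$-norm and $F_l$ is identically $\infty$, an isometry-up-to-scaling of $g_R$ is automatically homothetic for both, so the hypothesis holds.

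For the last assertion — if $W$ is properly homothetic then $g_R$ is flat — I would combine completeness of $g_R$ with the structure theory of homothetic (non-Killing) vector fields on complete Riemannian manifolds. The classical result (Alekseevskii; see also the references collected in Section \ref{s4.1}) states that a complete Riemannian manifold admitting a properly homothetic vector field is isometric to Euclidean space; in particular it is flat. So I would simply cite this: the flow of $W$ contracts or expands distances uniformly, and on a complete manifold this forces, via a fixed-point/blow-down argument on the homothety flow, the manifold to be $\R^n$ with the flat metric. The main obstacle is being careful about exactly which completeness is being used where, and about the sign conventions relating $W$ (respectively $-W$) to $\partial_t$; once the SSTK dictionary of Section \ref{s2} and the cited structure theorems are in place, each step is short. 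I expect part (i)'s contrapositive construction — producing an explicitly inextendible-yet-bounded geodesic or wind ball from an incomplete integral curve of $W$ — to be the one requiring the most care, since it must interface correctly with the definition of geodesic completeness of a WRS and with Theorem \ref{c63}.
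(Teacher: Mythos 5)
Your proposal follows essentially the same route as the paper's proof: part (i) by observing that integral curves of $W$ are wind curves (your computation $g((1,W),(1,W))=-1$ is exactly the verification that $F(W)<1<F_l(W)$) and contradicting the precompactness of balls from Theorem \ref{c63}, and part (ii) by applying Theorem \ref{JVresult} to the complete WRS $S_R$ translated by the complete homothetic field $W$, with flatness in the properly homothetic case coming from the classical structure theorem on complete manifolds. The only cosmetic differences are that the paper states the wind-curve property of $W$ directly rather than via the SSTK metric, attributes the flatness result to Tashiro (and Kobayashi--Nomizu--Yano) rather than Alekseevskii, and proves the completeness of the flow of $W$ by splitting into the Killing and properly homothetic cases instead of quoting it as a standard fact.
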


\begin{proof}

$(i)$ Notice that each integral curve $\rho$  of $W$ is a wind curve. Thus, if $\rho$ were forward or backward incomplete, then a closed forward or backward ball (centered at $\rho(0)$ and with radius equal to the finite length of $\rho$ towards $+\infty$ or $-\infty$ would be non compact  (recall that, by Theorem \ref{c63},  the geodesic completeness  of a WRS   is equivalent to the precompactness of its  balls).

$(ii)$ Assuming that $W$ is complete, the first assertion   follows from  Theorem~\ref{JVresult}.  However, the completeness of $W$ can be deduced from the completeness of $g_R$. Indeed, when $W$ is Killing, such a result follows from the fact that the flow of $W$ must preserve a small closed ball along all its integral curves (for a more general result valid even  in the semi-Riemannian case, see  \cite[Prop. 9.30]{Oneill83}). In the case that $W$ is properly homothetic,  Tashiro \cite[Th. 4.1]{TASHIRO} (which summarizes a conclusion from previous work by Kobayashi \cite{Koba}, Nomizu \cite{Nomizu} and Yano and Tagano \cite{YT})  ensures
that $(M,g_R)$ is flat (thus, obtaining the second assertion). Therefore, $(M,g_R)$ is globally covered by $\R^n$. But the homothetic vector fields  in $\R^n$ are well known and they must be complete\footnote{ Notice that they are affine, and all the affine vector fields of $\R^n$ has affine natural coordinates. Thus, their completeness follows by direct integration (or by applying general results such as \cite[Theorem 1]{SaFIELDS}.)  }.  
\end{proof}
It is worth emphasizing that the completeness of $\Sigma$ does {\em not} imply the completeness of $g_R$ {\em even in the Randers case}.
\begin{exe}\label{ex1}
Consider on $\R^+$ the Randers metric $R$ determined by Zermelo's $(g_R=dx^2, W=f\partial_x)$, where $f:\R^+\rightarrow \R$ is any  function satisfying the following.  Take the intervals $I_k=[2^{-(k+1)}, 2^{-k}], k=0, 1,\dots$  and put
$$
f(x)=
\left\{\begin{array}{lrl}
2^{-(4k+1)}-1 & & \hbox{on} \; I_{4k} \\
1-2^{-(4k+3)} & & \hbox{on} \; I_{ 4 k+2} 
\end{array}\right.
\qquad 
\hbox{so that} 
\left\{\begin{array}{lrl}
R(\partial_x)=2^{4k+1} & & \hbox{on} \; I_{4k} \\
R(-\partial_x)=2^{4k+3} & & \hbox{on} \; I_{4 k+2} 
\end{array}\right.
$$
with no  restriction on $f$ outside the above intervals except being $C^\infty$ and $|f|<1$ on all $\R^+$. As the length of each interval $I_k$ is $2^{-(k+1)}$,  the $R$-length $L$ of the  curves $\gamma_\pm(s)=\pm s$ satisfies  $L(\gamma_+|_{I_{4k}})=1$, $L(\gamma_-|_{I_{4k+2}})=1$. This implies the (forward and backward) completeness of  $R$, in spite of the incompleteness of 
$g_R$.\footnote{An alternative way of producing this  type of examples  appears considering the spacetime viewpoint. Take any SSTK splitting $\R\times M$ with a  Cauchy hypersurface $\{0\}\times M$ which inherits an incomplete Riemannian metric $g_R$ (a Cauchy hypersurface $S$ satisfying this property can be easily constructed in Lorentz-Minkowski, and moving $S$ with the flow of the natural timelike parallel vector field $K=\partial_t$ one obtains the required SSTK splitting). The completeness for the induced Randers metric  (obtained from the incomplete  $g_R$ and the wind $W$ in \eqref{eg} and \eqref{egbis}) is a consequence of the fact that $S$ is Cauchy (see \cite[Theorem 4.4]{CJS} or the more general  \cite[Theorem  5.11, part (iv)]{CJSwind}).}  
\end{exe}
 Now, we are ready to prove our main result. 
\begin{thm}\label{thm:globalresult}
The complete simply connected  WRS's   with constant flag curvature  lie in one of the following two exclusive cases, determined by Zermelo data: 
\begin{enumerate}[(i)]
\item  $(M,g_R)$ is a  model space of constant curvature  and $W$ is any of its Killing vector fields.

\item  $(M,g_R)$ is isometric to 
$\R^n$  and $W$ is  a properly homothetic (non-Killing) 
vector field.
\end{enumerate}
\end{thm}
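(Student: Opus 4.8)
The plan is to combine the local classification (Theorem \ref{tlocal}) with the completeness criterion (Theorem \ref{th_compl}) and then analyze the properly homothetic case separately. First I would invoke Theorem \ref{tlocal}: since $(M,\Sigma)$ has constant flag curvature $\kappa$, its Zermelo data satisfy $g_R$ has constant curvature $\kappa+\tfrac14\mu^2$ and $W$ is $\mu$-homothetic for some constant $\mu$. Then I would split into two cases according to whether $\mu=0$ (so $W$ is Killing) or $\mu\neq 0$ (so $W$ is properly homothetic); these are clearly exclusive, giving cases (i) and (ii).

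For case (i), $W$ is Killing and $g_R$ has constant curvature $\kappa$. I would argue that completeness of $\Sigma$ forces completeness of $g_R$: by Theorem \ref{th_compl}(i), $W$ is complete, but more to the point I need $(M,g_R)$ itself complete and simply connected, hence a model space. The cleanest route is to observe that a complete WRS has precompact wind balls (Theorem \ref{c63}), and since $F(v)\le C|v|_R$ locally (indeed globally on any region where $|W|_R$ is bounded, which holds here because $W$ Killing on a model space has bounded norm on balls), every $g_R$-metric ball is contained in a wind ball, hence precompact; so $g_R$ is complete by Hopf--Rinow. Being complete, simply connected, of constant curvature, $(M,g_R)$ is one of $\R^n$, $S^n(r)$, $\HH^n$. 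Conversely, by Theorem \ref{th_compl}(ii), if $g_R$ is a complete model space and $W$ is Killing (hence homothetic), then $\Sigma$ is complete; and by Remark \ref{rjv} it has constant flag curvature. So case (i) is exactly ``$(M,g_R)$ a model space, $W$ any Killing field.''

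For case (ii), $\mu\neq 0$. By the second assertion of Theorem \ref{th_compl}(ii) applied in reverse — actually, directly: $W$ properly homothetic forces $g_R$ to be flat by the Tashiro--Kobayashi--Nomizu--Yano--Tagano result quoted in the proof of Theorem \ref{th_compl}. Then simply connectedness and completeness of $g_R$ (which follows as in case (i), since $F(v)\le 2|v|_R$ near any point once one controls $|W|_R$ — here one must be slightly careful because a properly homothetic field on $\R^n$ has $|W|_R$ growing linearly, but wind balls of finite radius still stay in a bounded region, so $g_R$ is complete) give $(M,g_R)\cong\R^n$ with its flat metric. Conversely, given $\R^n$ flat and $W$ properly homothetic (automatically complete on $\R^n$, as noted in the proof of Theorem \ref{th_compl}), Theorem \ref{th_compl}(ii) gives completeness of $\Sigma$ and Remark \ref{rjv} gives constant flag curvature (with $\kappa=-\tfrac14\mu^2<0$). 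Hence case (ii) is exactly ``$(M,g_R)\cong\R^n$ flat, $W$ properly homothetic.''

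The main obstacle I anticipate is establishing completeness of the Riemannian metric $g_R$ from completeness of the WRS $\Sigma$: Example \ref{ex1} shows this implication is \emph{false} in general, so one genuinely needs the constant-curvature/homothetic structure to rescue it. The argument must use that on a model space a Killing field, and on flat $\R^n$ a homothetic field, has norm growing at most linearly, so that a finite-radius wind ball — which lies within $g_R$-distance roughly $2r$ of the center by the estimate $F\le 2|\cdot|_R$ valid wherever $|W|_R\le 1$, suitably adapted — is contained in a relatively compact set; combined with precompactness of wind balls (Theorem \ref{c63}) this yields local compactness of metric balls, hence geodesic completeness of $g_R$ via Hopf--Rinow. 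I would present this completeness step carefully and treat the converse directions (constructing the complete models from the data) as the easier half, citing Theorem \ref{th_compl}(ii) and Remark \ref{rjv}.
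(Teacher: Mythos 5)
Your overall architecture (local classification via Theorem \ref{tlocal}, then a completeness analysis split into the Killing and properly homothetic cases) matches the paper's, but the step you yourself flag as the main obstacle --- deducing completeness of $g_R$ from completeness of $\Sigma$ --- is precisely where your argument breaks down, in both cases. The containment ``every $g_R$-ball lies in a wind ball'' rests on $F(v)\le C|v|_R$ together with the ability to join nearby points by wind curves. Both fail as soon as $|W|_R\ge 1$ somewhere: on the critical set one has $F(v)=|v|_R^2/\bigl(2g_R(W,v)\bigr)$, which is not bounded by any multiple of $|v|_R$, and in the strong-wind region only a cone of directions is admissible, so points at arbitrarily small $g_R$-distance from $p_0$ may fail to lie in $B^+_\Sigma(p_0,r)$ for every $r$ (cf.\ Figure \ref{Randers-KropinaFig}, where the space is not even wind-connected). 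Since case (i) of the theorem allows Killing fields with $|W|_R\ge 1$ --- exactly the new situations the WRS framework is designed for --- your estimate only covers the already known Randers subcase. The paper avoids this entirely: $W$ is complete by Theorem \ref{th_compl}(i) and is Killing for $\Sigma$ in the sense of Lemma \ref{l3.5}, so $g_R=\Sigma-W$ is a complete WRS by Theorem \ref{JVresult}.

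In case (ii) there are two further problems. First, you derive flatness from Tashiro's theorem, which presupposes completeness of $g_R$ --- circular at this stage; the paper instead uses the purely local Proposition \ref{p4.4}(i) (a homothetic field on a space of constant curvature $\kappa\neq 0$ is Killing, so properly homothetic forces $\kappa=0$). Second, flatness plus a \emph{complete} properly homothetic field does not yield completeness of $g_R$: the paper's own remark notes that $\R^n\setminus\{0\}$ with $W=r\partial_r$ is flat, incomplete, and carries a complete properly homothetic field, so ``wind balls of finite radius stay in a bounded region'' cannot close the argument. The essential step missing from your sketch is to show that completeness of $\Sigma$ forces $W$ to have a zero: following the contracting flow $p_k=\phi_{-k}(p_0)$ produces a $g_R$-Cauchy sequence which eventually enters the region $|W|_R<1/2$, where the Randers bound $F\le 2|\cdot|_R$ is legitimate; the tail of the integral curve then lies in a precompact backward wind ball, yielding a limit $p_\infty$ with $W_{p_\infty}=0$, and pushing a small compact ball around $p_\infty$ outward by the expanding flow shows every closed $g_R$-ball is compact, whence $M\cong\R^n$. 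Without this, the case-(ii) conclusion is unsupported.
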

\begin{proof} The fact that the two WRS's  above have constant flag curvature comes from the local result (Theorem \ref{tlocal}) and their completeness from part $(ii)$ of  Theorem~\ref{th_compl}.

Conversely,  if $\Sigma$ is a complete simply connected WRS with constant flag curvature,   from the local result  we  know that locally $g_R$ has constant curvature (and, so, this happens globally) and $W$ is 
a homothetic vector field. Now, we have to check the completeness of $g_R$. Recall first that $W$ must be complete (Theorem \ref{th_compl}-$(i)$) and consider two cases. 

In the case that $W$ is Killing for $g_R$, then it is also Killing for $\Sigma$, in the sense that its flow is composed by isometries (indeed, it satisfies the condition (b) in Lemma~\ref{l3.5}). Notice that $g_R$ can be seen as the WRS obtained by the translation   $\Sigma-W$, and  the completeness of this WRS follows   from Theorem~\ref{JVresult},  since $\Sigma$ is complete  and,  as commented above, $W$ is Killing for $\Sigma$ and complete because it is Killing for $g_R$. 

In the case that $W$ is properly homothetic,  $g_R$ must be flat (see part $(i)$ of Proposition~\ref{p4.4}). Let us first see that the completeness of $W$ and $\Sigma$ implies that  $W$ must have a zero. Choose any point $p_0$ and the integral curve $\gamma$ of $W$ through $p_0$. We can assume that the $W$-flow $\phi_t$  is homothetic of ratio $e^{-\mu} <1$ for $t=-1$ (otherwise, take the flow of $-W$  and recall the comment about the reverse metric before Theorem \ref{c63}).  
Then,  $\{p_k:=\phi_{-k}(p_0)\}_{k\in \N}$ is a Cauchy sequence for $g_R$ and the $g_R$-length of $\gamma|_{(-\infty,0]}$ is finite. Moreover, for some 
$t_0\leq 0$ one has that $|W|_R<1/2$ at $q=\phi_{t_0}(p_0)$ and, then, also on all $\phi_t(p)$ for $t\leq t_0$. Thus, $|W|_R<1/2$ on a neighborhood $U$ of $\gamma ((-\infty,t_0])$ and
$\Sigma$ becomes a Randers metric $F$ on $U$. From the definition of $F$, one has 
$$
F(v_x) \leq \frac{| v_x|_R}{1-| W_x|_R} \leq  2 | v_x|_R $$ 
on $U$.  Thus, the $F$-length of $\gamma|_{(-\infty,0]}$ is also finite and $\{p_k\}_k$ 
becomes a Cauchy sequence also  for $F$. 
 Observe that there exists $t_0<0$ such that $\gamma|_{(-\infty,t_0)}$
 is contained
in the backward ball $B^-_{\Sigma}(\gamma(0),\ell_F(\gamma))$ (recall that $F$ is Randers in the neighborhood $U$ of $\gamma|_{[-\infty,t_0]})$), and this ball is, by the hypothesis of completeness
of $\Sigma$ and Theorem \ref{c63}, precompact. 
 Then the Cauchy sequence $\{p_k\}_k$ 
 will have a limit $p_\infty$ and 
$W_{p_\infty}=0$ (on $p_k$, $|W|_R$ is smaller than $e^{-k\mu}\rightarrow 0$), as required. 
Now, for some small $\epsilon>0$, the closed $g_R$-ball $\bar B(p_\infty,\epsilon)$ 
is compact and, thus, so is $\phi_k(\bar B(p_\infty,\epsilon))=\bar B(p_\infty,e^{\mu k}\epsilon)$ for all $k>0$, which implies  that the flat manifold has to be isometric to $\R^n$. 

Finally,  the classical models of simply connected spaceforms plus part $(ii)$ of Theorem~\ref{th_compl} imply that the two stated cases are the unique possibilities of global representation for a WRS spaceform.

\end{proof}

\begin{rem} In the proof of completeness  for homothetic $W$ above, it was crucial  that  the completeness of $\Sigma$ implied the existence of a point invariant by  the $W$-flow. Indeed, $\R^n$ minus the origin (or minus any closed set of radial half lines) is an example of flat incomplete manifold with a complete homothetic vector field, namely, $r\partial_r$.

Thus, in the case $(ii)$ one can choose the origin $0$ of $\R^n$ as the unique point where $W$ must vanish. Then, for some  $ \mu  \neq 0$, the vector field
$Y:=W-\mu r \partial_r$ is Killing, vanishing  at 0, and tangent to the spheres centred at 0.  
\end{rem}

\section{Appendix:  results on homothetic and Killing fields}\label{s4}

 Next,  some results on Killing and homothetic vector fields  spread in the literature, which become relevant for our discussions, are collected for the convenience of the reader (see also the summary in \cite{RomSan02} for other related results).

 \subsection{Global and local results} \label{s4.1} Global results 
(to be taken into account in the global classification of constant flag WRS) are the following. 
\begin{prop}
 Let $(M,g_R)$ be a  complete Riemannian manifold. 

\begin{enumerate}[(i)]
\item If $W$ is a homothetic vector field and $g$ is not flat, then $W$ is Killing.

\item  If $W$ is a homothetic (or, with more generality, an affine) vector field with bounded (pointwise) norm,  then $W$ is  Killing.
Moreover, in the case that $g_R$ is flat then $W$ is parallel. 

\item If $g_R$ has constant curvature $\kappa\leq 0$ and $W$ is  homothetic with bounded norm then $\kappa=0$ and $W$ is parallel.  
\end{enumerate}
\end{prop}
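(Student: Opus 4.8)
The plan is to obtain all three items from one classical fact — that a complete Riemannian manifold carrying a proper (i.e.\ non-isometric) homothety must be flat — together with elementary arguments on the universal cover.

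For $(i)$ I would argue by contrapositive. Suppose $W$ is homothetic, $\mathcal{L}_W g_R = 2\mu g_R$, with $\mu\neq 0$; after replacing $W$ by $-W$ we may assume $\mu<0$. Then the flow $\{\phi_t\}$ of $W$ satisfies $\phi_t^{*}g_R = e^{2\mu t}g_R$, so $\phi_1$ multiplies the $g_R$-length of every curve by $e^{\mu}<1$ and hence contracts $g_R$-distances; since $(M,d_{g_R})$ is a complete metric space, the contraction $\phi_1$ has a fixed point $p_\infty$ and $\phi_n(q)\to p_\infty$ for every $q\in M$. Using the scaling of sectional curvature under a constant conformal change, together with the fact that $\phi_n$ is an isometry of $(M,\phi_n^{*}g_R)$ onto $(M,g_R)$, one gets $K_{g_R}(q;\sigma)=e^{2n\mu}\,K_{g_R}\bigl(\phi_n(q);(d\phi_n)_q\sigma\bigr)$ for every $2$-plane $\sigma$ at $q$. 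Since the base points $\phi_n(q)$ remain in a fixed compact neighbourhood of $p_\infty$, the curvatures on the right stay uniformly bounded while $e^{2n\mu}\to 0$, whence $K_{g_R}(q;\sigma)=0$; as $q,\sigma$ are arbitrary, $g_R$ is flat, a contradiction. (Alternatively, $(i)$ may be quoted verbatim from Tashiro \cite[Th.~4.1]{TASHIRO}, which collects the theorems of Kobayashi \cite{Koba}, Nomizu \cite{Nomizu} and Yano--Tagano \cite{YT}.)

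For $(ii)$ I would first dispose of the homothetic case, which reduces to $(i)$: if $g_R$ is non-flat then $W$ is already Killing by $(i)$; if $g_R$ is flat and complete, lift $W$ to the universal cover $\R^{n}$, where it becomes an affine field $x\mapsto Ax+b$ with $A+A^{T}=2\mu\,\mathrm{Id}$, and boundedness of $|W|_R$ forces $|Ax+b|$ to be bounded on all of $\R^{n}$, hence $A=0$; thus $\mu=0$ and $W$ is parallel, which descends to $M$. For the stated generalization to an arbitrary affine $W$ with bounded norm I would either invoke the classical statement directly (Kobayashi \cite{Koba}, Nomizu \cite{Nomizu}) or run the same dichotomy along the de Rham decomposition of the universal cover — on each non-flat irreducible factor an affine field is automatically Killing, and on the Euclidean factor the linear computation above applies. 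This affine case is precisely the step I expect to be the genuine obstacle: once $(i)$ is in hand the homothetic case is immediate, but the affine version really needs either an external reference or the de Rham machinery (``affine transformations respect the de Rham decomposition'' plus ``affine $=$ Killing on a complete irreducible non-flat manifold''), which I would not reprove here.

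For $(iii)$, item $(ii)$ already gives that $W$ is Killing with bounded norm (and parallel if $g_R$ is flat). If $\kappa=0$ there is nothing left to prove; if $\kappa<0$, lift to $\HH^{n}(\kappa)$ and use that every nonzero Killing field of hyperbolic space has unbounded pointwise norm, so $W\equiv 0$. Hence $\kappa<0$ is impossible unless $W$ vanishes identically, and in every case $W$ is parallel, being nontrivial only when $\kappa=0$.
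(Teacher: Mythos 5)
Your proposal is correct in substance, and its overall architecture coincides with the paper's: part $(ii)$ is delegated to Hano \cite{Han} in both treatments, and part $(iii)$ is reduced in both to the unboundedness of nonzero Killing fields on hyperbolic space. Where you genuinely diverge is in $(i)$: the paper simply cites \cite[Lemma VI.2]{KoNo1}, whereas you reprove it via the Banach fixed point theorem applied to the distance-contracting time-one map $\phi_1$ of the flow, followed by the rescaling identity $K_{g_R}(q;\sigma)=e^{2n\mu}K_{g_R}(\phi_n(q);(d\phi_n)_q\sigma)$. This is a clean, self-contained version of Kobayashi's classical argument. The one point you should make explicit there is that $\phi_1$ (hence $\phi_n$) is globally defined on $M$: this is true because, with $\mu<0$, each forward integral curve of $W$ has $g_R$-length at most $|W_{\gamma(0)}|_R/|\mu|$, hence stays in a compact set and extends to all $t\geq 0$ by completeness of $g_R$ --- but it is not automatic, and the paper itself treats the completeness of $W$ with care elsewhere (Theorem \ref{th_compl}).

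The one real gap is in $(iii)$. You assert, without proof or reference, that every nonzero Killing field of $\HH^n$ has unbounded pointwise norm; but this assertion is precisely the entire content of the paper's proof of $(iii)$, which establishes it by realizing $\HH^n$ as the unit hyperboloid in $\LL^{n+1}$, writing its Killing fields as restrictions of skew-adjoint matrices $S$, and evaluating $|Sp|$ along points escaping to infinity to conclude $S=0$. As written, your step is a citation-shaped hole rather than a wrong step, and it can be filled quickly: for a Killing field $Z$ and any geodesic $\gamma$, the Jacobi equation gives $\frac{d^2}{dt^2}\tfrac12|Z|^2(\gamma(t))=|\nabla_{\dot\gamma}Z|^2-R(\dot\gamma,Z,Z,\dot\gamma)\geq 0$ when the curvature is nonpositive, so a bounded $|Z|^2$ is convex and bounded on $\R$, hence constant along every geodesic; this forces $\nabla Z=0$ and $R(\dot\gamma,Z,Z,\dot\gamma)=0$ for all $\dot\gamma$, which for $\kappa<0$ yields $Z=0$. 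Finally, your explicit remark that $\kappa<0$ is only excluded when $W\not\equiv 0$ addresses an edge case the paper glosses over, and is to your credit.
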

\begin{proof}
$(i)$ See \cite[Lemma VI.2 (p. 242)]{KoNo1}.

$(ii)$ The first assertion is a well known theorem by Hano \cite[Theorem 2]{Han} (see also \cite[Theorem VI.3.8]{KoNo1}), for the second one see \cite[Lemma 4]{Han}.

$(iii)$ We know from  $(ii)$  that $W$ is Killing and, in the case $\kappa=0$, it is parallel. So, passing to the universal covering, it is enough to prove that the norm of any Killing vector field of the  hyperbolic space $\HH^n$ is unbounded. Looking $\HH^n$ as 
the upper component of the unit timelike vectors of Lorentz-Minkowski $\LL^{n+1}$, its Killing vector fields can be seen as the subalgebra of the Killing fields of $\LL^{n+1}$ which vanish at the origin (the flow of such Killing fields  leaves  invariant the unit vectors, so, they 
preserve and are tangent to $\HH^n$).  If $S^*$ is one of such vector fields, it can be  constructed by taking each skew-adjoint linear matrix $S$ of $\LL^{n+1}$ 
and putting $S^*_p=(Sp)_p$ for all $p\in \LL^{n+1}$, where $(Sp)_p$ is obtained by regarding  $Sp\in 
\LL^{n+1}$ as a vector in $T_p\LL^n$ (see for example 
\cite[Example 9.29]{Oneill83}). Notice that these skew-adjoint matrices are:
$$
S= \left(
\begin{array}{c | c}
0 & b_1  \dots b_n \\
\hline
\begin{array}{c}
b_1 \\ \vdots \\ b_n 
\end{array}
& A
\end{array} \right)
$$
where $A$ is any skew-symmetric $n\times n$ matrix ($a_{ij}=-a_{ji}$). Then, choosing the points $p_i^\pm(\lambda)= (\sqrt{1+\lambda^2},0,\dots, 0,^{(i)} \pm \lambda, 0, \dots , 0)^t\in \HH^n$ for $i=1, \dots, n$, and making $\lambda\rightarrow +\infty$ one checks that the unique possibility to bound the induced Riemannian norm on all $Sp$ is to put $S=0$. 
\end{proof}

\begin{rem}\label{r_cocotazo}

The previous result shows that, under the completeness of $g_R$, one obtains necessarily a WRS which is not Randers in the following cases: 

 (a) when the wind vector $W$ is properly homothetic,

(b) in the case of constant curvature $k\leq 0$, when $W$ is Killing but non-parallel.

 Indeed, the Randers models of constant flag curvature obtained in \cite{BR, BRS} fail to be complete in these cases. For the case of positive curvature, of course, one can take any Killing vector field as the wind $W$ and multiply it by a constant so that it becomes mild  everywhere (or if it has no zeroes, strong everywhere).
\end{rem}
 The following local result (with no assumption of completeness) bounds the possible cases of WRS with constant flag curvature.
\begin{prop}\label{p4.4} Let $Z$ be a homothetic vector field  in a  Riemannian  manifold of constant curvature $\kappa$:
\begin{enumerate}[(i)]
\item When $ \kappa \neq 0$,  $Z$  is Killing. 

\item  When $ \kappa  =0$,   $Z$ 
 has  constant norm if and only if  $Z$ 
is  parallel. 
\end{enumerate}
\end{prop}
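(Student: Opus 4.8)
Proposition \ref{p4.4} asks us to analyze homothetic vector fields on a Riemannian manifold of constant curvature $\kappa$, distinguishing the cases $\kappa \neq 0$ and $\kappa = 0$.

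\textbf{Plan for part (i).} The claim is that a homothetic vector field $Z$ on a space of constant curvature $\kappa \neq 0$ must be Killing. My approach would be purely local and tensorial. Write ${\mathcal L}_Z g_R = 2\mu g_R$ for the constant $\mu$; the goal is to show $\mu = 0$. The key identity relates the Lie derivative of the curvature tensor to $Z$: since the flow of $Z$ scales the metric by a factor, it rescales the Riemann tensor in a controlled way, while for a space of constant curvature $R_{ijkl} = \kappa(g_{ik}g_{jl} - g_{il}g_{jk})$. Taking the Lie derivative of both sides and using ${\mathcal L}_Z R = 0$ (the curvature of a homothety-rescaled metric, with the same Levi-Civita connection since $Z$ is affine, is unchanged in the fully covariant $(0,4)$ form only up to the scaling), one gets a relation forcing $\mu \kappa = 0$. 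Concretely: an affine vector field preserves the Levi-Civita connection, hence ${\mathcal L}_Z R^{i}{}_{jkl} = 0$ for the $(1,3)$-tensor; lowering an index with $g_R$ and using ${\mathcal L}_Z g_R = 2\mu g_R$ gives ${\mathcal L}_Z R_{ijkl} = 2\mu R_{ijkl}$. On the other hand, differentiating $R_{ijkl} = \kappa(g_{ik}g_{jl}-g_{il}g_{jk})$ directly gives ${\mathcal L}_Z R_{ijkl} = 4\mu\,\kappa(g_{ik}g_{jl}-g_{il}g_{jk}) = 4\mu R_{ijkl}$. Comparing, $2\mu R_{ijkl} = 4\mu R_{ijkl}$, so $\mu R_{ijkl} = 0$; since $\kappa \neq 0$, the curvature tensor is nonzero, forcing $\mu = 0$. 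Alternatively, one may simply cite that this is the content of part (i) of the previous Proposition combined with Tashiro's theorem, or the classical references already invoked (Kobayashi, Nomizu), but the self-contained tensorial argument above is short and clean. The main thing to be careful about is the precise scaling weight of $R_{ijkl}$ under ${\mathcal L}_Z$, since the two indices' worth of metric on the right-hand side each contribute a factor $2\mu$.

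\textbf{Plan for part (ii).} Here $\kappa = 0$, so the manifold is flat and $Z$ is affine. I want: $|Z|_R$ constant $\iff$ $Z$ parallel. The implication ($\Leftarrow$) is immediate, since if $\nabla Z = 0$ then $\nabla |Z|^2 = 2 g_R(\nabla Z, Z) = 0$. For ($\Rightarrow$), suppose $|Z|^2$ is constant. Differentiating, $0 = g_R(\nabla_X Z, Z)$ for all $X$. Now decompose $\nabla Z$ into symmetric and skew parts: the symmetric part is $\tfrac12 {\mathcal L}_Z g_R = \mu g_R$, so $\nabla_X Z = \mu X + B(X)$ with $B$ skew-symmetric. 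Plugging into $g_R(\nabla_X Z, Z) = 0$ gives $\mu g_R(X,Z) + g_R(B(X),Z) = 0$; taking $X = Z$ and using skew-symmetry of $B$ (so $g_R(B(Z),Z) = 0$) yields $\mu |Z|^2 = 0$. If $Z \not\equiv 0$ this forces $\mu = 0$, so $Z$ is Killing with $\nabla_X Z = B(X)$ skew. Then it remains to show a Killing field of constant norm on a flat manifold (locally $\R^n$) is parallel: differentiate the skew relation using flatness ($\nabla_X \nabla_Y Z - \nabla_Y \nabla_X Z = R(X,Y)Z = 0$ plus the Killing-field second-order identity $\nabla_X \nabla_Y Z = R(Y,Z)X$, which vanishes in the flat case), obtaining $\nabla \nabla Z = 0$, i.e. $B$ is parallel; combined with $g_R(B(X),Z) = -\tfrac12 X(|Z|^2) = 0$ for all $X$, so $B(X) \perp Z$ everywhere, and differentiating once more together with $\nabla B = 0$ pins down $B = 0$ on the (connected) region where... actually the cleanest finish: on flat $\R^n$ an affine field has the form $Z(x) = A x + b$ for a constant matrix $A = \nabla Z$; $|Z|^2$ constant along the flow lines forces, after the computation above, $A = 0$, hence $Z$ parallel. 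The ($\mu = 0$) step may leave open the locus where $Z = 0$; but on the open set where $Z \neq 0$ we get $\mu = 0$, and since $\mu$ is a global constant, $\mu = 0$ everywhere provided $Z$ is not identically zero (the identically-zero case being trivially parallel).

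\textbf{Main obstacle.} The delicate point is part (ii)'s final step: passing from ``$Z$ is Killing of constant norm on a flat space'' to ``$Z$ is parallel.'' The cleanest route is the explicit affine normal form $Z = Ax + b$ on $\R^n$ (valid locally by flatness), where the constancy of $|Ax+b|^2$ as $x$ ranges over a neighborhood forces $A^t A = 0$ hence $A = 0$. I would present part (i) via the curvature-tensor Lie-derivative comparison and part (ii) via this affine normal form, keeping both arguments local so no completeness is needed.
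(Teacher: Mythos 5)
Your proposal is correct and matches the paper's proof in both parts: part (ii) is essentially verbatim the paper's argument (write the affine field in flat orthonormal coordinates as $Z=Ax+b$ and note that constancy of $|Ax+b|^2$ forces the quadratic part, hence $A$, to vanish), and part (i) rests on the same observation, namely that a $\mu$-homothety rescales the constant curvature, which is consistent with $\kappa\neq 0$ only if $\mu=0$. The sole difference is that the paper phrases (i) via the flow $\psi_t$ (citing the observation in \cite{BRS} that $\psi_t$ sends the constant curvature $\kappa$ to $e^{2\mu t}\kappa$), whereas you give the equivalent infinitesimal computation comparing ${\mathcal L}_Z R_{ijkl}=2\mu R_{ijkl}$ with the $4\mu R_{ijkl}$ coming from the constant-curvature form of the tensor; your version is self-contained and the scaling weights are handled correctly.
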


\begin{proof}
$(i)$  As observed in \cite[\S 3.2.1]{BRS}, the flow $\psi_t$ of a $\mu$-homothetic field sends the constant curvature $k$ to $e^{2\mu t}k$, and then by the hypothesis of constant curvature, $e^{2\mu t}=1$ and $\mu=0$. 

$(ii)$   Observe that any homothetic vector field is affine. Then,  by a direct local computation  in natural orthonormal coordinates of $\R^n$, any affine vector field $Z$  and its norm are written as 
$$
Z=\sum_{j=1}^n \left(a_j +\sum_{i=1}^n a^i_j x_i\right) \partial_j ,\qquad | Z|^2= \sum_{j=1}^n \left(a_j +\sum_{i=1}^n a^i_j  x_i\right)^2 ,
$$
and the matrix $(a_i^j)$  vanishes by equating $\frac{\partial^2}{\partial x_k^2} | Z|^2=0$ for  $k=1,\dots , n$.
  \end{proof}
\begin{rem}
 Observe that the part $(i)$ of Proposition \ref{p4.4} can be generalized in several directions. For example,   Knebelman \cite[Theorem 4 and below]{Kne45} showed that any affine vector field in a non-Ricci flat Einstein manifold is Killing, and  Knebelman and Yano \cite[Theorem 2]{KneYan60}
 that,  for non-zero
constant  scalar curvature,  any homothetic vector field is Killing. 
\end{rem}
 \subsection{Further results for Killing vectors  with constant norm} \label{s4.2} Once the homothetic case was ruled out by Proposition \ref{p4.4}, the hypothesis of constant norm for  Killing vector fields becomes essential for the possible Kropina models. So, the following results are in order. 

\begin{lemma}\label{p4.1} A  Killing vector field $Z$ on a Riemannian manifold  has constant norm if and only if it is geodesic. In this case 
  $g(\nabla_X Z,\nabla_Y Z) = R(X,Z,Z,Y)$ for all vector fields $X,Y$ on $M$. 
\end{lemma}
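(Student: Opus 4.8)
The plan is to prove both implications using the fundamental skew-symmetry of the covariant derivative of a Killing field. First I would recall that if $Z$ is Killing then the endomorphism $X \mapsto \nabla_X Z$ is skew-adjoint with respect to $g$; equivalently $g(\nabla_X Z, Y) + g(X, \nabla_Y Z) = 0$ for all $X,Y$. This is the only structural fact needed for the equivalence. To get ``constant norm $\Leftrightarrow$ geodesic'', I would differentiate $|Z|^2 = g(Z,Z)$ in an arbitrary direction $X$: by metric compatibility $X\bigl(g(Z,Z)\bigr) = 2\, g(\nabla_X Z, Z)$, and by skew-adjointness $g(\nabla_X Z, Z) = -\,g(X, \nabla_Z Z)$. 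Hence $X(|Z|^2) = -2\, g(\nabla_Z Z, X)$ for every $X$, so $|Z|^2$ is constant if and only if $\nabla_Z Z = 0$, i.e.\ the integral curves of $Z$ are geodesics. This handles the first sentence.

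For the curvature identity, assuming now $Z$ is Killing with constant norm (so $\nabla_Z Z = 0$), I would start from the standard second-order identity satisfied by a Killing field, $\nabla_X \nabla_Y Z - \nabla_{\nabla_X Y} Z = R(X,Z)Y$ (with the curvature sign convention matching the paper's $R(X,Y,Z,W) = g(R(X,Y)Z,W)$). I would then compute $Y\bigl(g(\nabla_X Z, Z)\bigr)$ in two ways. On one hand, since $g(\nabla_X Z, Z) = -g(X,\nabla_Z Z) = 0$, this derivative vanishes identically. On the other hand, expanding by metric compatibility gives $Y\bigl(g(\nabla_X Z,Z)\bigr) = g(\nabla_Y \nabla_X Z, Z) + g(\nabla_X Z, \nabla_Y Z)$, and at a point I may choose $X,Y$ extended so that $\nabla_Y X = 0$ there, turning the first term into $g(R(Y,Z)X, Z)$ via the Killing identity. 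Setting the sum to zero yields $g(\nabla_X Z, \nabla_Y Z) = -\,g(R(Y,Z)X,Z) = R(Y,Z,Z,X) \cdot(\pm 1)$, and after using the pair symmetry $R(Y,Z,Z,X) = R(X,Z,Z,Y)$ of the curvature tensor together with the chosen sign convention I obtain exactly $g(\nabla_X Z, \nabla_Y Z) = R(X,Z,Z,Y)$.

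I would present the computation at a fixed point $p$ using vector fields extended from $p$ with vanishing covariant derivatives at $p$, which removes all the $\nabla_X Y$ correction terms and keeps the algebra short; the identity is tensorial in $X$ and $Y$, so pointwise verification suffices. The only genuine care needed — and the main place an error could creep in — is bookkeeping of the curvature sign convention, since $R(X,Z,Z,Y)$ as written in the statement must match the paper's convention for $R$; I would fix that convention explicitly at the start of the proof (e.g.\ $R(X,Y)Z = \nabla_X\nabla_Y Z - \nabla_Y\nabla_X Z - \nabla_{[X,Y]}Z$ and $R(X,Y,Z,W)=g(R(X,Y)Z,W)$) and propagate it consistently through the Killing second-order identity. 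Everything else is a routine two-line manipulation once $\nabla_Z Z = 0$ is in hand from the first part.
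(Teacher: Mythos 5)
Your proposal is correct and matches the paper's treatment: the first assertion is proved there by exactly your one-line computation $g(\nabla_ZZ,X)=-g(\nabla_XZ,Z)=-\tfrac12 X(g(Z,Z))$, and for the curvature identity the paper merely calls it "a straightforward computation" and cites Berestovskii--Nikonorov, whose argument is precisely the one you sketch (differentiate $g(\nabla_XZ,Z)\equiv 0$ and invoke the second-order Killing identity $\nabla^2_{Y,X}Z=R(Y,Z)X$). Your sign bookkeeping does close up correctly, since $-R(Y,Z,X,Z)=-R(X,Z,Y,Z)=R(X,Z,Z,Y)$.
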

\begin{proof}  The first assertion follows from $g(\nabla_ZZ,X)=-g(\nabla_XZ,Z)=- \frac 12 X(g(Z,Z))$.
The second one is a straightforward computation  (see 
\cite[Lemma 3, Prop. 1] {BerNik09}).
\end{proof}

\begin{prop}\label{p4.2} Let $Z$ be a Killing vector field of constant norm 1 on a Riemannian manifold:
\begin{enumerate}[(i)]
 \item $Z$ cannot exist for negative curvature, and must be parallel for 0 curvature.

\item $Z$ cannot exist in manifolds of positive curvature of even dimension.  

\item In round spheres of dimension odd $S^{2n+1}(r)$, the vector field $Z$ is a Hopf vector field, i.e., 
one of the natural unit Killing vector fields  tangent to the fiber in 
Hopf fibration. 
\end{enumerate}

\end{prop}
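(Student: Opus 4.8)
The plan is to deduce everything from Lemma \ref{p4.1}. I would set $A:=\nabla Z$, the $(1,1)$-tensor $A(X)=\nabla_XZ$, and record three facts: $A$ is pointwise skew-symmetric because $Z$ is Killing; $A(Z)=\nabla_ZZ=0$ because, having constant norm, $Z$ is geodesic by Lemma \ref{p4.1}; and $g(A(X),Z)=g(\nabla_XZ,Z)=\tfrac12X(|Z|^2)=0$, so $A$ takes values in $Z^\perp$ and hence restricts to a skew-symmetric endomorphism of $Z^\perp$. The engine of the whole argument is the identity $g(A(X),A(Y))=R(X,Z,Z,Y)$ of Lemma \ref{p4.1}: for a unit $X\perp Z$ it says $|A(X)|^2=R(X,Z,Z,X)$, i.e. the left-hand side is the sectional curvature of $\mathrm{span}\{X,Z\}$.

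First I would settle $(i)$ and $(ii)$, which drop out immediately. If the curvature is negative then $\dim M\ge2$ (a $1$-manifold is flat), so a unit $X\perp Z$ exists at each point and $0\le|A(X)|^2=R(X,Z,Z,X)<0$ is absurd; hence no such $Z$ exists. If the curvature vanishes then $R\equiv0$, so $|A(X)|^2=0$ for all $X$, whence $A\equiv0$ and $Z$ is parallel. For $(ii)$, when $\dim M$ is even the space $Z_p^\perp$ is odd-dimensional, so the skew-symmetric endomorphism $A_p|_{Z_p^\perp}$ is singular; taking a unit $X$ in its kernel yields $R(X,Z,Z,X)=|A_p(X)|^2=0$, which contradicts positivity of the curvature.

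For $(iii)$ I would use that the curvature is the constant $\kappa=1/r^2$, so Lemma \ref{p4.1} gives $g(A(X),A(Y))=\kappa\bigl(g(X,Y)-g(X,Z)g(Y,Z)\bigr)$; equivalently $-A^2$ is $\kappa$ times the orthogonal projection onto $Z^\perp$, so $J:=rA$ is an orthogonal complex structure on the distribution $Z^\perp$ with $J(Z)=0$. To actually recognise $Z$ as a Hopf vector field I would then pass to the ambient $S^{2n+1}(r)\subset\R^{2n+2}$ and argue exactly as in the computation already made above for $\HH^n\subset\LL^{n+1}$: every Killing field of the round sphere has the form $Z_p=(Bp)_p$ for a unique skew-symmetric matrix $B$ (its flow being the restriction of the orthogonal linear flow $e^{tB}$, which preserves every centred sphere), and the condition $|Z|\equiv1$ reads $\langle -B^2p,p\rangle=1$ whenever $|p|^2=r^2$. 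Since $-B^2$ is symmetric and constant on a sphere it must be a scalar multiple of $|\cdot|^2$, namely $-B^2=\tfrac1{r^2}\mathrm{Id}$; thus $rB$ is an orthogonal complex structure $J$ on $\R^{2n+2}$ and $Z$ is exactly the unit generator $\tfrac1r J$ of the circle action of the Hopf fibration attached to $J$, i.e. a Hopf vector field.

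The hard part will be the last identification in $(iii)$: upgrading the purely pointwise conclusion $-A^2=\kappa\,\mathrm{pr}_{Z^\perp}$ to a global statement about the Hopf fibration really does require the explicit description of the Killing algebra of the round sphere (the skew-symmetric endomorphisms of $\R^{2n+2}$), in complete parallel with the $\HH^n$ argument; with that in hand only linear algebra remains. By contrast $(i)$ and $(ii)$ are one-line consequences of the Bochner-type identity of Lemma \ref{p4.1} plus the fact that a skew-symmetric endomorphism of an odd-dimensional space is singular. I would also make sure to dispose of the trivial degenerate cases --- a $1$-dimensional $M$, or a one-dimensional $Z^\perp$ when $\dim M=2$ --- in which the statements are vacuous or immediate.
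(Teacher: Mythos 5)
Your proof is correct, and for parts $(i)$ and $(ii)$ it coincides with the paper's: the identity $g(\nabla_XZ,\nabla_XZ)=R(X,Z,Z,X)$ of Lemma \ref{p4.1} applied to a unit $X$ orthogonal to $Z$ kills negative curvature and forces $\nabla Z=0$ in the flat case, and the kernel vector of the skew-adjoint operator $\nabla Z$ restricted to the odd-dimensional $Z^\perp$ handles $(ii)$. The genuine difference is in $(iii)$: the paper does not prove this part, it simply cites Wiegmink \cite{Wieg} (see also \cite{GilLli}), whereas you supply a complete argument, and it is sound. Every Killing field of $S^{2n+1}(r)\subset\R^{2n+2}$ is indeed of the form $p\mapsto (Bp)_p$ with $B$ skew-symmetric (completeness of the sphere plus $\mathrm{Isom}(S^{2n+1}(r))=O(2n+2)$); the condition $|Z|\equiv 1$ says the quadratic form $\langle -B^2p,p\rangle$ equals $|p|^2/r^2$ on a sphere, hence everywhere by homogeneity, so $-B^2=r^{-2}\,\mathrm{Id}$ by polarization, $J=rB$ is an orthogonal complex structure, and $Z=\tfrac1r J$ is the unit Hopf field of the corresponding fibration. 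This is in effect a short self-contained reproof of Wiegmink's theorem that deliberately mirrors the paper's own linear-algebra computation for $\HH^n\subset\LL^{n+1}$ in the proof of the completeness proposition of subsection \ref{s4.1}; what it buys is an appendix with no external dependence, at the cost of invoking the explicit description of the Killing algebra of the round sphere. Your intermediate pointwise observation that $-(\nabla Z)^2=\kappa\,\mathrm{pr}_{Z^\perp}$ is correct but, as you note yourself, not sufficient and ultimately not needed once you pass to the ambient description.
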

 \begin{proof} $(i)$  Using  the formula in  the previous lemma 
 $$
 g(\nabla_X Z,\nabla_X Z) 
= R(X,Z,Z,X).  $$
 In the case of negative curvature, a contradiction is obtained by choosing $X\neq 0$ orthogonal to $Z$. The flat case follows applying this formula to all $X$. 

 $(ii)$
 As $ \nabla Z  $ is a skew adjoint operator  ($X\mapsto \nabla_XZ$)  that  can be restricted to the (odd dimensional) orthogonal to $Z$, it has a 0 eigenvalue,  and   one of  such  eigenvectors  $X$  yields a contradiction in the formula above.\footnote{This proof follows the spirit of Berger's one \cite{Berger} to prove that  any Killing vector field on an even dimensional compact manifold of positive curvature must have a zero.}

$(iii)$ The proof (for  radius  $r=1$)  is  in \cite[Theorem 1]{Wieg} (see also  \cite[\S 3]{GilLli}).
\end{proof}

Recall that the exception in the case $(iii)$  corresponds to  a natural unit Killing vector field  tangent to the fiber in Hopf fibration   which is extended to  a fibration in any odd dimension  $S^{2n+1}\rightarrow CP^{n}$  (see for instance \cite[Vol. II, p. 135]{KoNo2}) thus providing all the possible examples in the odd dimensional case.

\section*{Acknowledgements}

The authors warmly acknowledge  to Professor D. Bao (San Francisco State University) his prompt and kind comments on the paper. Especially, we are very grateful  he confirmed our assumption that the computations behind \cite{BR, BRS} with $|W|_R<1$ remain valid
for the strict inequality $|W|_R>1$,    
a crucial step  in the proof of Theorem \ref{tlocal}.   Partially
supported by Spanish  MINECO/FEDER project reference
MTM2015-65430-P,  (MAJ) and MTM2013-47828-C2-1-P (MS).

\end{document}